\DeclareMathOperator{\Sing}{Sing}
 \newtheorem{theorem-main}{\bf Theorem}
\newtheorem{theorem}{\bf Theorem}
 \newtheorem{lemma}[theorem]{\bf Lemma}
 \newtheorem{proposition}[theorem]{\bf Proposition}
  \newtheorem{rk}[theorem]{\bf Remark}
  \newtheorem{example}[theorem]{\bf Example}
    \newtheorem*{claim}{\bf Claim}
      \newtheorem*{assertion}{\bf Assertion}
 \newcommand{\R}{\mathbb{R}}
 \newcommand{\Q}{\mathbb{Q}}
  \newcommand{\C}{\mathbb{C}}
    \newcommand{\D}{\mathbb{D}}
 \newcommand{\SSS}{\mathbb{S}}
\newcommand{\eps}{\varepsilon}
\newcommand{\wt}[1]{{{\widetilde{#1}}}}
\def\R{\mathbb R}
\def\C{\mathbb C}
\def\Q{\mathbb Q}
\newcommand{\g}{\gamma}
\newcommand{\G}{\Gamma}
\renewcommand{\wt}[1]{\widetilde{#1}}
 \renewenvironment{proof}{{\em \noindent Proof.}}
 {\hfill $\square$\newline}
 \newcommand{\obra}[3]{{\sc #1} {\em #2}. {#3}.}
\author{Rog\'erio Mol}
\address{Departamento de Matem\'{a}tica\\
Universidade Federal de Minas Gerais\\
Av. Ant\^onio Carlos, 6627, CP 702, 30123-970
Belo Horizonte, Brazil}
\email{rmol@ufmg.br}
\thanks{First author partially supported by Pronex-Faperj, CNPq-Universal, CAPES-Mathamsud}
\author{Fernando Sanz S\'{a}nchez}
\address{Departamento de \'{A}lgebra, An\'{a}lisis Matem\'{a}tico, Geometr\'{\i}a y Topolog\'{\i}a\\
Universidad de Valladolid\\
Paseo de Bel\'en 7, 47011 Valladolid, Spain}
\email{fsanz@agt.uva.es}
\thanks{Second author
partially supported
by Ministerio de Educaci\'on y Cultura, Spain, process MTM2013-46337-C2-1-P and MTM2016-77642-C2-1-P, and by Programa Hispano-Brasile\~no de Cooperaci\'on Interuniversitaria, process PHB2010-0122-PC}
\title[Vector Fields and Separatrices]
{Real Analytic Vector Fields with First Integral and Separatrices}
\date{}
\dedicatory{To our teacher  Felipe Cano, with immense gratitude}
\subjclass[2010]{32S65, 37F75, 34Cxx, 14P15}
\keywords{Real analytic vector field, first integral, formal and analytic separatrix, reduction of singularities, index of vector fields}
\begin{document}

\begin{abstract} We prove that   a germ      of  analytic vector field at $(\mathbb{R}^3,0)$
  that possesses a non-constant analytic first integral
 has a real formal separatrix. We provide an example which shows that such a vector field does not  necessarily have a real analytic separatrix.
\end{abstract}
\maketitle

\section{Introduction}
In this paper we prove the following result:

\begin{theorem}
\label{th:main}
Let $X$ be a germ of real analytic vector field at $(\mathbb{R}^3,0)$
 that has an analytic first integral. Then $X$ has a
    real formal separatrix.  The  statement is optimal in the sense that such a vector field $X$ does not necessarily have a real analytic separatrix.
\end{theorem}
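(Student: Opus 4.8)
The plan is to use the first integral to trade the three-dimensional problem---in which separatrices may genuinely fail to exist, by the example of G\'omez-Mont and Luengo---for a two-dimensional one, where Camacho--Sad guarantees them, while carrying the real structure along at every step. We may assume $f(0)=0$ and, after replacing $f$ by its reduced equation (still a first integral of the foliation $\FF$ defined by $X$), that $f$ is reduced; replacing $X$ by $X/h$, with $h$ the gcd of its components, we may also assume $\Sing(\FF)=\Sing(X)$ has codimension $\ge 2$. If the real locus of $\Sing(X)$ has positive dimension it contains a real analytic curve through $0$, which is a real analytic separatrix and we are done; so assume this real locus is $\{0\}$. Complexify: $X_\C$ is a holomorphic vector field on $(\C^3,0)$ with holomorphic first integral $f_\C$, both defined over $\R$; writing $\sigma$ for complex conjugation, $S:=\{f_\C=0\}$ is a $\sigma$-invariant, $\FF_\C$-invariant hypersurface, each of whose irreducible components is invariant (by the logarithmic identity $X(f)/f=\sum_i X(f_i)/f_i=0$) and which $\sigma$ permutes. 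If $f$ has constant sign near $0$, then, $f$ being reduced, the nonempty real level sets $\{f=c\}$ for small $c\neq 0$ are boundaries of contractible compact $3$-manifolds, hence homeomorphic to the $2$-sphere; since $X$ is tangent to them, Poincar\'e--Hopf forces $X$ to vanish somewhere on each, contradicting that the real locus of $\Sing(X)$ is $\{0\}$. Therefore $f$ changes sign, $S$ has a component $S_0$ with $2$-dimensional real locus, and $S_0$, being locally the complexification of that real locus $S_0\cap\R^3$, equals $\sigma(S_0)$, i.e.\ is $\sigma$-invariant.

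Next, restrict $\FF_\C$ to the (possibly singular) surface germ $(S_0,0)$, obtaining a singular holomorphic foliation $\GG$ by curves on it, and take a $\sigma$-equivariant reduction of singularities: first a $\sigma$-equivariant resolution $\rho\colon(\widetilde{S_0},D)\to(S_0,0)$ of the surface, then a $\sigma$-equivariant reduction of singularities of the lifted foliation. This yields a smooth surface germ $\widetilde{S_0}$, a normal crossings exceptional divisor $D=\bigcup_i D_i$ with negative definite intersection matrix on which $\sigma$ acts permuting the $D_i$, and a foliation $\widetilde{\GG}$ with only reduced singularities, all lying on $D$ (the dicritical cases are handled similarly). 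It now suffices to produce a separatrix of $\widetilde{\GG}$ that is transverse to $D$ at a point of $D$ and invariant under $\sigma$: its image under $\rho$ is then a real formal curve through $0$ invariant by $\FF$, which is the desired real formal separatrix.

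This last point is the heart of the matter. At a $\sigma$-fixed point $p$ lying on a single $\sigma$-fixed component $D_{i_0}$ and not on a corner, the reduced singularity of $\widetilde{\GG}$ at $p$ has the real branch $D_{i_0}$ among its separatrices, so its eigenvalues are real, the separatrix transverse to $D_{i_0}$ is the unique $\sigma$-invariant one, and hence it is real---analytic if the singularity is nondegenerate, and merely formal exactly when the singularity is a saddle-node with $D_{i_0}$ as its strong separatrix, in which case it is the (generically divergent) center manifold. Thus it is enough to find, on some $\sigma$-fixed component $D_{i_0}$, a $\sigma$-fixed non-corner singular point that is not of this last kind. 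By Camacho--Sad one has $\sum_{p\in D_i}\mathrm{CS}(\widetilde{\GG},D_i,p)=D_i\cdot D_i<0$ for each invariant $D_i$, while the index of a saddle-node along its strong separatrix is $0$; the obstacle is that, a priori, all of this negativity could be carried by the corners and by $\sigma$-conjugate pairs of non-real singular points, exactly as for the planar rotation $x\,\partial_y-y\,\partial_x$, which has no real separatrix at all. The main difficulty I foresee is precisely to exclude this: one must combine the $\sigma$-action on the weighted dual graph of $D$ with the \emph{global} negative definiteness of the whole intersection matrix and the connectedness of $D$ to force the existence of such a good real singular point.

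For the optimality statement one exhibits $X$ with an analytic first integral $f$ all of whose analytic invariant curves through $0$ are non-real. By the above, such an $X$ must fall in the last case, with $\{f=0\}$ having a necessarily singular real surface component $S_0$---a cone $\{x^2+y^2=z^2\}$ being a natural model---on which the reduction of singularities of $\widetilde{\GG}$ is arranged so that all the Camacho--Sad negativity is carried by the corners and by $\sigma$-conjugate pairs of reduced nondegenerate singularities, while every $\sigma$-fixed non-corner singularity is a saddle-node having the exceptional divisor as its strong separatrix; then the only real separatrices of $X$ through $0$ are the center manifolds of these saddle-nodes, and those are divergent. The concrete building block is the classical Euler-type divergent formal invariant curve of a planar saddle-node, embedded inside $\{f=0\}$ together with a contribution that, after complexification, places the companion convergent separatrices either inside the exceptional divisor or at $\sigma$-conjugate non-real points of the resolution; checking on the resolution that no separatrix is both real and analytic then shows that $X$ has a real formal but no real analytic separatrix.
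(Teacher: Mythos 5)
Your proposal has a genuine gap at what you yourself call ``the heart of the matter''. Your strategy uses the first integral only to produce an invariant surface $S_0\subset\{f_\C=0\}$, and then hopes to extract a real formal separatrix from a $\sigma$-equivariant reduction of singularities of the restricted foliation via Camacho--Sad negativity of the intersection matrix. But, as you note, all of that negativity could sit at corners and at $\sigma$-conjugate pairs of non-real singularities (the planar center is exactly this phenomenon), and you leave unproved the existence of a ``good'' $\sigma$-fixed, non-corner reduced singularity. This is not a technical loose end: the dual graph, its $\sigma$-action and negative definiteness do not encode the extra structure that the paper actually exploits, namely that the invariant surface is a \emph{fiber} of $f$ and that nearby regular fibers have simply connected components accumulating to it (Proposition~\ref{pro:simply-connected fibers}, a real Milnor-fibration type statement proved through monomialization and gradient retractions). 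From this the paper deduces that the Poincar\'e--Hopf index $I_L(X)$ of the restriction to some local component $L$ of $\{f=0\}$ vanishes (Proposition~\ref{prop-index}), and then a purely two-dimensional Bendixson-type argument on the real blown-up surface (Proposition~\ref{pro:index-separatrix}) shows ``no formal separatrix in $L$'' forces $I_L(X)\neq 0$. Some such input beyond tangency to one invariant surface is indispensable, and your outline does not supply it. A secondary gap: your dichotomy ``$f$ changes sign or the level sets are spheres'' fails when the real zero locus (or $\Sing(df)$) has a one-dimensional component; e.g.\ $f=x^2+y^2$ has constant sign, its levels are cylinders, and $X=-y\partial_x+x\partial_y+z\partial_z$ is tangent to them with isolated singularity and no contradiction arises. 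The paper treats this case separately, via the invariance of one-dimensional components of $\Sing(\omega_f)$ (Proposition~\ref{pro:singular-set-invariant} and the Lemma in Section~\ref{section-proof}), which then directly yields an analytic separatrix; you would need that argument too.

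For the optimality half you give only a strategy, not an example: no explicit vector field, and no proof that the surviving formal separatrix diverges, which is the genuinely hard point. The paper's example is $X_a=Y_a+z^2\partial/\partial x$ with first integral $f=z$; after two blow-ups and a ramification the problem reduces to showing that the weak separatrix of an explicit saddle-node family diverges for generic $a$, and this is established via the Martinet--Ramis moduli together with Elizarov's computation of the tangent to the moduli map. Note also that your structural guess is off: in the paper's example the zero fiber is the \emph{smooth} plane $\{z=0\}$ (the saddle-node only appears after reducing the restricted planar vector field), so a singular cone such as $\{x^2+y^2=z^2\}$ is not forced, contrary to your claim that the real surface component must be singular.
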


 Speaking in general terms, let $X$ be a germ of      real analytic vector field at $(\mathbb{R}^n,0)$.  A
{\em real analytic separatrix} of $X$ is a germ of  irreducible analytic
curve $\G$ at $0 \in \mathbb{R}^n$ which is invariant by $X$. If
$\g(t)=(\g_1(t),...,\g_n(t))\in (t\R\{t\})^n\setminus\{0\}$ is a parametrization of $\G$, the
invariance condition is equivalent to saying  that there exists $h(t) \in \R\{t\}$  such that $X(\g(t))=h(t) \frac{d\g}{dt}(t)$ for any $t$, where
$h(t) \not\equiv 0$ if and only if $\G$ is
not contained in the singular
locus $\Sing(X)=\{p\, ;X(p)=0\}$ of $X$.
  Replacing $\R\{t\}$ by
 $\R[[t]]$,  we obtain
  the concept of
{\em real formal separatrix}.
 On the other hand,
considering the canonical complexification of $X$ to a holomorphic vector field at $(\mathbb{C}^n,0)$  and changing $\R$ to $\C$, we have the  concepts of {\em complex holomorphic se\-pa\-ra\-trix} and
 {\em complex formal separatrix},  seen as  objects in $(t\C\{t\})^{n}\setminus\{0\}$ and  $(t\C[[t]])^{n}\setminus\{0\}$, respectively.

We also recall that a first integral of $X$ is a germ of function $f:(\R^n,0)\to\R$ such that $df(X)=0$. The expression ``analytic first integral'' in Theorem~\ref{th:main} could be interpreted either as ``holomorphic first integral'' or ``real analytic first integral''. In fact, if $h:(\C^3,0)\to\C$ is a non-constant holomorphic first integral of (the complexification of) $X$, then one can check that the real traces of $Re(h)$ and $Im(h)$ are real analytic first integrals of $X$  with at least one of them non-constant.

Notice that  in Theorem~\ref{th:main} we may assume without loss of generality that $X$ has an isolated singularity at $0$, otherwise there is at least a real analytic separatrix of $X$ contained in $\Sing(X)$. On the contrary, we do not assume necessarily that the singular locus $\Sing(df)= \{p; df(p) = 0\}$ of the  first integral $f$ of $X$ is isolated. However, taking into account that   $\Sing(df)$   is invariant by the vector field $X$, we may assume that it has no one-dimensional real components (see below in Section~\ref{section-proof} for details).

Analytic or formal separatrices may of course be defined  for  holomorphic vector fields. They are algebraically manipulable invariant objects which play a central role in the study of the local dynamics of the vector field.  Let us briefly review some  avatars of the problem of existence of separatrices, related to the situation of real vector fields.

\strut

{\em Planar case, $n=2$.} First,  the Separatrix
Theorem of Camacho and Sad  \cite{Cam-S} asserts that a planar vector field always has  a complex
holomorphic separatrix, although it may not have formal real
separatrices: take, for instance, the standard vector field of {\em
center-type},
$
X=-y\frac{\partial}{\partial x}+x\frac{\partial}{\partial y}.
$
In this example, $X$ has an analytic first integral, showing that
Theorem~\ref{th:main} is not true for planar vector fields.
On the other hand, there are
examples of planar real vector fields with real formal separatrices, none
of them  convergent. An explicit example could be found in \cite[Example 3.7(3)]{Ris}.   Below, in Section~\ref{sec:example}, we provide other examples used for the proof of the second part of Theorem~\ref{th:main}.

It is also known that an analytic vector field $X$ at $0\in\R^2$ with Poincar\'{e} index equal to zero has a real formal separatrix.
Below, in Proposition~\ref{pro:index-separatrix}, we provide a ge\-ne\-ra\-li\-za\-tion of this result for vector fields defined in singular analytic surfaces,
 which is one of the main ingredients of the  proof of Theorem~\ref{th:main}.


\strut

{\em Three dimensional case, $n=3$.} Camacho-Sad's Theorem is no longer valid   in this case:
G\'{o}mez-Mont and Luengo in \cite{Gom-L} have constructed a family of
 vector fields in $(\C^3,0)$
without complex separatrices. They state the result for analytic separatrices, although the same proof works
in order to show that any vector field in that family is actually devoid of complex formal separatrices.
An explicit member of that family with real coefficients could be found in \cite[p. 333]{Oli}. As a consequence of Theorem~\ref{th:main},  vector fields in G\'omez-Mont and Luengo's family with real coefficients cannot have non-constant holomorphic first integrals.
%

As for the planar case, there are examples of analytic vector fields at $(\R^3,0)$
with formal real
separatrices, none of them convergent (i.e. without real analytic separatrices). An explicit example can be found in \cite[p. 3]{Can-M-S2}. We construct in Section~\ref{sec:example} another example  which has, moreover, a non-constant analytic first integral.  It   will prove the second part of Theorem~\ref{th:main}, that is, that the conclusion ``formal'' in the statement cannot be improved to ``analytic''.

We should mention that, in a recent paper,
D. Cerveau and A. Lins Neto proved that a germ of complex  analytic vector field in $(\mathbb{C}^{3},0)$,
with isolated singularity, that is tangent to a holomorphic foliation of codimension one always has
a complex analytic separatrix \cite[Proposition 3]{Ce-LN}.
This result  implies in particular that  any vector field
$X$ as  in Theorem 1 actually {\em has} a complex analytic separatrix, inasmuch as $X$ is tangent to the foliation $df=0$, where $f$ is a first integral.  Such a complex separatrix may not be a real one (once more by our example in Section~\ref{sec:example} below).

\strut

{\em Higher dimension, $n\geq 4$.} Families of holomorphic vector fields at $(\C^{n},0)$ without complex separatrices (neither convergent nor formal)   are constructed in \cite{Lue-O} for any dimension $n\geq 4$, generalizing the three dimensional construction carried out in \cite{Gom-L}. Each one of these families contains an explicit example with real coefficients.

Examples of real analytic vector fields without   real formal separatrices having   analytic first integral can be constructed in any dimension $n \geq 4$, showing that the phenomenon depicted in
 Theorem~\ref{th:main} is   exclusive for   dimension  three.
When $n=2p$ is even, we   consider a {\em multicenter} vector field, written in coordinates $(x_1,y_1,...,x_p,y_p)$ as $Z_{n} =X_1+\cdots+X_p$, where $X_j=-y_j\frac{\partial}{\partial x_j}+x_j\frac{\partial}{\partial y_j}$. When $n = 2p+3$ is odd, $p \geq 1$, we take coordinates $(x_1,y_1,...,x_p,y_p, x,y,z)$ and set $Z_{n} = Z_{2p} + W$,
where $Z_{2p}$ is  a multicenter  vector field in  the variables $(x_1,y_1,...,x_p,y_p)$  and  $W$ is  one of the examples of three dimensional real vector fields in G\'{o}mez-Mont and Luengo's family  written in the variables $(x,y,z)$.
 Notice that, in both cases, $Z_n$  has  $f(x_1,y_1) = x_{1}^{2} +  y_{1}^{2}$ as a first integral.

Finally, concerning real analytic separatrices in any dimension, it is worth  mentioning  Moussu's paper \cite{Mou}, where it is proved that an  analytic gradient vector field at $(\R^{n},0)$
always has a real analytic separatrix.  Below, we describe some arguments of that result, those which are used in our proof of Theorem~\ref{th:main} (concretely, in Proposition~\ref{pro:simply-connected fibers}).

 \strut

Let us sketch the proof of Theorem~\ref{th:main} and the plan of the article.  Let $X$ be as in the hypothesis of the statement,  having isolated singularity, and assume that the first integral $f$ of $X$ is such that its singular locus $\Sing(df)$ does not have one-dimensional real components.
Using a Brunella's result \cite{Bru} which  guarantees  that $X$ has a non-trivial orbit accumulating to the origin, we may assume, moreover, that the special fiber $Z=f^{-1}(f(0))$ of $f$ is not reduced to the single point $0\in\R^3$.
Under these assumptions, we prove, in Section~\ref{sec:fibers}, a technical result (Proposition~\ref{pro:simply-connected fibers}) which can be framed in the context of real versions of Milnor's Fibration Theorem \cite{Mil}.
Roughly speaking, it asserts that, in any sufficiently small neighborhood of the origin, $f$ has regular fibers with connected components which are simply connected and which accumulate to  a given  two-dimensional component of the special fiber $Z$.
Our proof of Proposition~\ref{pro:simply-connected fibers} requires some avatars of known results in the theory of reduction of singularities of analytic functions. We recall them in the form needed for our purposes.

In Section~\ref{sec:index}, we define, for any two-dimensional component $L$ of $Z$, the index $I_L(X)$ of the restriction $X|_L$, a generalization to singular surfaces of the usual notion of {\em Poincar\'{e} index} of a planar vector field  at a singular point. It is not really a new notion, it corresponds in one or another equivalent way to a particular case of  standard definitions of the index   of a vector field in a singular invariant variety (see \cite{Bra-Se-Su} for more information). Pushing the restricted vector field $X|_L$ to nearby fibers, using homotopic invariance of the index and the aforementioned result about simply connected fibers, we show that $I_L(X)$ is equal to zero for at least one component $L$.

In Section~\ref{section-proof}, we conclude the proof of  the first part of  Theorem~\ref{th:main} proving that, given a two-dimensional component $L$ of $Z$, either there exists a formal separatrix of $X$ inside $L$ or $I_L(X) \neq 0$ (Proposition~\ref{pro:index-separatrix} below). Incidentally, we use again the reduction of singularities as presented in Section~\ref{sec:fibers} for the proof of this result. As mentioned before, it  generalizes a known result of planar vector fields to the situation of vector fields in singular surfaces. It is related to Bendixson's formula for the computation of
the Poincar\'{e} index using hyperbolic and elliptic sectors of the vector field at the singularity.

Finally, in Section~\ref{sec:example}, we provide an explicit example of a vector field $X$ with isolated singularity at $0\in\R^3$ which has an analytic first integral but which does not have any real analytic separatrix.  The difficult part to check is that the formal real separatrix of such an example does really diverge. For that, we  use  the Martinet-Ramis moduli of planar holomorphic foliations of saddle-node type \cite{Mar-R, Ily} and the  computation of the tangent of the moduli map in Elizarov's work \cite{Eli}. We  thank Lo\"{\i}c Teyssier for his comments and decisive remarks concerning these arguments and techniques.

\section{About the fibers of a real analytic function} \label{sec:fibers}
 The main result in this section is  Proposition~\ref{pro:simply-connected fibers} below, a result on the geometry of the fibers
of a real analytic function in $\R^3$.
We provide a proof adapted to our situation which employs the reduction of singularities  of analytic functions. Some of the arguments are inspired on those of the paper \cite{Gra-S}
 and also on a part of Roche's work \cite{Roc} concerning {\em Real Clemens Structures}.

Our starting point is the following result (see Aroca-Hironaka-Vicente
\cite{Aro-H-V}, Hironaka \cite{Hir} or Bierstone-Milman
\cite{Bie-M1,Bie-M2}).
\begin{theorem}\label{th:reduction-singularities}
Let $f:(\R^n,0)\to(\R,0)$ be a non-zero real analytic function. There exists a neighborhood $U$ of $0\in\R^n$ and a sequence of
blow-ups with closed analytic non-singular centers
\begin{equation}\label{eq:blow-ups}
\pi:M_m\stackrel{\pi_m}{\longrightarrow}M_{m-1}\stackrel{\pi_{m-1}}{\longrightarrow}
\cdots  \stackrel{\pi_2}{\longrightarrow}M_1\stackrel{\pi_1}{\longrightarrow}U
\end{equation}
 such that
$f\circ\pi:  M_m \to \R$ is everywhere locally of monomial type, i.e. it
can be written locally as a monomial times a unit in analytic
coordinates. Moreover, if $Y_{j-1}$ is the center of $\pi_j$ for
$j=1,...,m$, and we define recursively the total divisor $E_j$ at
stage $j$ by $E_j=\pi_{j}^{-1}(E_{j-1} \cup Y_{j-1})$ with $E_0= \emptyset$, then
$Y_j$ has normal crossing with $E_j$ and it is contained in the
 singular  locus   $\Sing(df_j)$  of $f_{j}=f \circ \pi_{1}\circ\cdots\circ\pi_j$,  for $j\geq 0$,
where $f_0=f$.
\end{theorem}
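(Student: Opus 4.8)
\medskip
\noindent\emph{Sketch of the proof.}
The plan is to quote Hironaka's resolution of singularities, in the local real-analytic form of Aroca--Hironaka--Vicente \cite{Aro-H-V} (see also Bierstone--Milman \cite{Bie-M1,Bie-M2}), applied to the analytic function $f$ on a sufficiently small neighbourhood $U$ of the origin, and then to read off the refinements in the statement. The key preliminary remark is that ``$f\circ\pi$ of monomial type'' is strictly weaker than a full embedded resolution of the hypersurface $\{f=0\}$: a function $g$ is of monomial type at a point $p$ whenever $\ord_p(g)\le 1$, since then $g$ is either a unit (if $g(p)\neq 0$) or, because $dg(p)\neq 0$, part of an analytic coordinate system at $p$. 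Hence the locus where $f_j$ fails to be of monomial type is contained in $\{p:\ord_p(f_j)\ge 2\}=\{f_j=0\}\cap\Sing(df_j)\subseteq\Sing(df_j)$, and it suffices to produce a finite chain of blow-ups that lowers the maximal order of $f_j$ to $1$ while modifying only that locus.

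Concretely, I would induct on $\mu_j=\max_p\ord_p(f_j)$, the maximum being taken over the preimage in $M_j$ of a fixed relatively compact neighbourhood of $0$. If $\mu_j\le 1$, the process stops and $f_j$ is everywhere of monomial type by the remark above. If $\mu_j\ge 2$, one runs the canonical resolution algorithm built on Hironaka's maximal contact \cite{Hir,Bie-M1}: near each point the top-order locus $\{\ord_p(f_j)=\mu_j\}$ lies in a smooth hypersurface of maximal contact, and the algorithm prescribes finitely many blow-ups with smooth centres --- including the preliminary blow-ups needed to put the top-order locus in normal crossings with the accumulated divisor --- each of them contained in the current top-order locus, after which $\mu$ has strictly decreased. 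Since $\mu_j\ge 2$ at that stage, every such centre lies in $\{f_j=0\}\cap\Sing(df_j)$, and, by construction of the algorithm, has normal crossings with the total divisor $E_j$. Iterating, and shrinking $U$ so that the whole process is finite --- which is exactly what the \emph{local} resolution theorem of \cite{Aro-H-V} guarantees --- one obtains the sequence \eqref{eq:blow-ups} with all the asserted properties.

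I expect the delicate part to be the extraction of this precise form, and not resolution itself, which we simply quote. The condition $Y_j\subseteq\Sing(df_j)$ is not a formal consequence of ``$\pi$ resolves $f$'': the embedded-resolution algorithm for the pair $(U,\{f=0\})$, which additionally forces the total transform divisor to be normal crossings with $E_m$, may well blow up at a smooth point of $\{f_j=0\}$ that is tangent to a component of $E_j$ --- a point where $df_j\neq 0$. One must therefore use the variant of the algorithm whose resolution invariant is minimal exactly on the (open) set of monomial-type points, and stop it as soon as the maximal order reaches $1$. A secondary, purely technical, point --- specific to the non-compact real-analytic setting --- is that finitely many blow-ups suffice over a fixed neighbourhood of $0$; this is ensured by the relative, local nature of the resolution statement in \cite{Aro-H-V}, which we take as our reference.
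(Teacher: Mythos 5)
The paper offers no proof of this theorem: it is stated as the ``starting point'' and simply quoted from Aroca--Hironaka--Vicente \cite{Aro-H-V}, Hironaka \cite{Hir} and Bierstone--Milman \cite{Bie-M1,Bie-M2}. So at the level of strategy --- derive everything from the canonical local resolution/monomialization results of those references --- you are doing exactly what the paper does, and your preliminary remark (a point where $\ord_p(f_j)\le 1$ is automatically a monomial-type point, so the bad locus sits inside $\{f_j=0\}\cap\Sing(df_j)$) is correct and is the right reason why the extra conditions are not restrictive.

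However, the concrete sketch you substitute for the citation does not work as written. Your induction invariant $\mu_j=\max_p\ord_p(f_j)$ is computed on the \emph{total} transform $f_j=f\circ\pi_1\circ\cdots\circ\pi_j$, and this quantity does not decrease under blow-up: the exceptional divisor enters $f_j$ with multiplicity, so $\mu_j$ typically \emph{increases} (for $f=xy$ in $\R^2$, already of monomial type with $\mu_0=2$, the total transform has order $3$ at the corner of the first chart). Worse, the goal you reduce to --- ``lower the maximal order of $f_j$ to $1$'' --- is unattainable whenever $f$ is non-reduced or $\{f=0\}$ has several branches through a point ($f=x^2$, $f=xy$), and it is also unnecessary, since monomial type allows arbitrary order. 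The invariant that the quoted algorithms actually decrease is attached to the weak/strict transform (the exceptional monomial factored out), or is the full Bierstone--Milman invariant, and their output is precisely monomialization of the total transform as asserted; your argument needs to be rephrased in those terms rather than in terms of $\ord_p(f_j)$. Finally, the ``delicate point'' you flag is less delicate than you suggest, again because $f_j$ is the total transform: a smooth point of the strict transform that is tangent to a component of $E_j$ lies on at least two components of $\{f_j=0\}$, hence $\ord_p(f_j)\ge 2$ and $df_j(p)=0$ there. The only points of $\{f_j=0\}$ with $df_j\neq 0$ are smooth points of a reduced component meeting neither $E_j$ nor any other component, where $f_j$ is already a coordinate times a unit and where the resolution invariant is minimal, so no center of the canonical algorithm passes through them; this is why the paper can quote the statement, including $Y_j\subset\Sing(df_j)$, directly from the references.
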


In particular, if $Z=f^{-1}(0)$ and $\tilde{Z} =Z\setminus\Sing(df)$  is assumed to be non-empty (thus $\tilde{Z}$ is a smooth analytic hypersurface), then $\pi$ restricts to an analytic isomorphism from $\pi^{-1}(\tilde{Z})$ to $\tilde{Z}$.

For our purposes, we will use {\em real blow-ups} instead of the
usual (projective) blow-ups $\pi_j$ in Theorem~\ref{th:reduction-singularities}.
In order to define properly a real blow-up, we must consider the
category of real analytic manifolds with boundary and corners; i.e.
manifolds locally defined in coordinate charts $(x_1,...,x_n)$ as
quadrants $\{x_{i_1}\geq 0,x_{i_2}\geq 0,\ldots,x_{i_r}\geq 0\}$ and
so that the changes of coordinates are analytic isomorphisms preserving the quadrants. The point is that a real blow-up (also called a ``polar blow-up'') produces a boundary in the blown-up manifold, namely   the inverse image of the center by the blow-up  (called   \emph{exceptional divisor}), which corresponds to the set of   {\em half-directions} (instead of  directions)  in the normal bundle of  the center as a submanifold of the ambient space. Subsequent real blow-ups produce new boundaries which intersect old boundaries along  corners.

Let us  recall the main definitions  here   (see for instance the recent reference
\cite{Mar-R-S} for details). First, we define the real blow-up,  with closed non-singular center $Y$, on a real manifold without boundary $M$. Let $\pi:M_1\to M$ be the usual blow-up of $M$ with center $Y$ and let $\tau:M_1^+\to M_{1}$ be the orientable double covering of $M_1$. The composition $\pi\circ\tau$ is an analytic map which ramifies along the divisor  $E=\pi^{-1}(Y)$. Then the real blow-up of $M$ with center $Y$ is the restriction $\sigma:M'_1\to M$ of  $\pi\circ\tau$  to only one sheet, so that $M'_1$ is an analytic manifold with boundary $\partial M'_1=E$. Next, more generally, if $M$ is a real analytic manifold with boundary and corners and $Y\subset M$ is a non-singular analytic submanifold having normal crossings with $\partial M$, we may consider first
 $M$ immersed in a real analytic manifold $\wt{M}$ with no boundaries or corners of the same dimension (the immersion is locally uniquely determined up to analytic isomorphisms), so that $\partial M$ becomes a normal crossing divisor of $\wt{M}$ and such that $Y$ is sent into a non-singular submanifold $\wt{Y}\subset\wt{M}$ with normal crossings with $\partial M$ inside $\wt{M}$. The real blow-up
$\sigma:M'\to M$ with center $Y\subset M$ is the restriction of the
real blow-up $\wt{\sigma}:\wt{M}'\to\wt{M}$ with center $\wt{Y}$ to
$M'=\,\overline{\wt{\sigma}^{-1}(M\setminus Y)}$.

\strut

With this construction in mind, we    adapt Theorem~\ref{th:reduction-singularities} to obtain a version which uses   real blow-ups  and  which will be  more convenient  for us. Although we can consider general statements, we will concentrate  on  three-dimensional analytic functions with some extra condition concerning its singular locus.

 Fix a germ $f:(\R^3,0)\to(\R,0)$ of   analytic function.  Consider the prime decomposition $f=f_1^{n_1}f_2^{n_2}\cdots f_r^{n_r}$, where each $f_j$ is an irreducible germ of analytic function, and let $h=Red(f)=f_1f_2\cdots f_r$. Notice that $Z=f^{-1}(0)=h^{-1}(0)$. Assume the following property, that we   call {\em Reduced Isolated Singularity}:
 \begin{quote}
  {\bf  (RIS).-} The germ of analytic set $Z=f^{-1}(0)$ is not reduced to  $\{0\}$ and $\Sing(dh)\subset\{0\}$.
 \end{quote}
Note that the hypothesis (RIS) implies that, in some neighborhood of the origin, the set $Z\setminus\{0\}$ is a non-singular two-dimensional analytic submanifold and that the irreducible components $f_j^{-1}(0)$ of $Z$, as germs of   analytic sets, only intersect at $0$. (The converse of this result is not true: take $f=Red(f)=y^3-x^6$ for which the special fiber $Z=\{y-x^2=0\}$ is a non-singular surface at every point and the $z$-axis is contained in $\Sing(df)$.)
To be more precise, let
  $\eps>0$ be sufficiently small such that $f$ is defined and analytic in a neighborhood of the closed ball $V=\overline{B}(0,\eps)$, and such that $Z\cap V$ cuts transversally the boundary of $V$.  By
the Conic Structure
Theorem (see Milnor \cite{Mil} or vdDries \cite{vdD} for a more
general statement),  the set  $(Z\setminus\{0\})\cap V$ has finitely many connected components, denoted by $L_1, L_2,..., L_r$, where each $L_i$ is a non-singular analytic
surface immersed in $V$ whose closure in $V$ is homeomorphic to the
cone at $0$ over the link $C_i= \partial V \cap L_i$
(a curve homeomorphic to $\SSS^1$). The germs of the components $L_i$ at $0$ are well defined and do not depend on $\eps$. We will use the same notation $L_i$ for both the components of $(Z\setminus\{0\})\cap V$ (for any given sufficiently small $\eps$) and their germs. They will be called   {\em local components} of the special fiber $Z=f^{-1}(0)$.

\begin{proposition}
\label{cor:real-reduction}
Let $f:(\R^3,0)\to(\R,0)$ be a germ of  analytic function  that satisfies the hypothesis (RIS). Then, if $\eps>0$ is sufficiently small and $V=\overline{B}(0,\eps)$, there is a sequence of real blow-ups (independent of $\eps$)
\begin{equation}\label{eq:real-blow-ups}
\sigma:M'_m\stackrel{\sigma_m}{\longrightarrow}M'_{m-1}
\stackrel{\sigma_{m-1}}{\longrightarrow} \cdots \stackrel{\sigma_2}{\longrightarrow}
M'_1\stackrel{\sigma_1}{\longrightarrow}V,
\end{equation}
 such that the composition $f\circ\sigma$ is everywhere locally of monomial type and such that,
if $L$ is a local component of   $Z=f^{-1}(0)$, we have:
\begin{enumerate}[(i)]
  \item $\sigma^{-1}(L)$ is diffeomorphic to the half-open cylinder $[0,1)\times\mathbb{S}^1$, where the boundary $\{0\}\times\mathbb{S}^1$ corresponds to the link $C=L\cap \partial V$.
  \item The strict
  transform $L'=\overline{\sigma^{-1}(L)}$ of $\overline{L}=L\cup\{0\}$ is a
  real analytic submanifold of $M'_m$ with boundary and corners,
  homeomorphic to the closed cylinder $[0,1]\times\SSS^1$.
  \item  Denoting $\partial L'=C_\infty\cup\sigma^{-1}(C)$ the two connected components of the boundary of $L'$, we have that  $L'$
cuts transversally the total divisor $E'_m$
  along  $C_\infty$, which is a piecewise smooth
  analytic curve homeomorphic to $\SSS^1$.
  \item The strict transforms of two different local components do not intersect.
  \end{enumerate}
Moreover, $\sigma_1$ is the real blow-up with center $Y'_0 = \{0\}$ and, if
$Y'_{j-1}$ is the center of $\sigma_j$ for $j=2,...,m$, and we define
recursively the total divisor $E'_j$ at stage $j$ by
$E'_j=\sigma_{j}^{-1}(E'_{j-1} \cup  Y'_{j-1}  )$ with $E'_0= \emptyset $, then,  for any $j\geq 1$,
 $Y'_j\subset E'_j$ and $E'_j$ is homeomorphic to the
sphere $\SSS^2$.
\end{proposition}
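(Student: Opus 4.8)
The plan is to obtain the statement from the classical reduction of singularities, Theorem~\ref{th:reduction-singularities}, by replacing every projective blow-up in the tower \eqref{eq:blow-ups} by its real (polar) analogue and then reading off the topology of the divisors and of the strict transform of $Z$. First I would apply Theorem~\ref{th:reduction-singularities} to $f$. By (RIS), $\Sing(dh)\subset\{0\}$ for $h=Red(f)$, so $Z\setminus\{0\}$ is smooth, its local components are pairwise disjoint, and consequently near every point of $V\setminus\{0\}$ the function $f$ is a unit times a power of a coordinate, i.e.\ already of monomial type. Hence, by the standard functoriality of the reduction of singularities \cite{Bie-M1,Bie-M2}, the tower \eqref{eq:blow-ups} may be taken so that all its centers lie over $0$ and its first center is $Y_0=\{0\}$; this tower depends only on the germ of $f$ at $0$, and one then chooses $\eps>0$ small enough. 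Now I would replace each projective $\pi_j$ by the real blow-up $\sigma_j$ with the same center $Y'_{j-1}$. This is legitimate by induction: a real blow-up, just like a projective one, is an analytic isomorphism off its exceptional divisor, so the center $Y_{j-1}$ (which for $j\ge2$ sits over $0$, hence inside the total divisor) lifts to a submanifold $Y'_{j-1}\subset E'_{j-1}$, and its normal crossings with $E_{j-1}$ --- a local, chart-by-chart condition --- passes to the real setting, so the real blow-up of $M'_{j-1}$ along $Y'_{j-1}$ is well defined in the category of manifolds with corners. This yields $\sigma\colon M'_m\to V$ with $\sigma_1$ the polar blow-up of $\{0\}$, the recursively defined total divisors $E'_j$, and $Y'_j\subset E'_j$ for $j\ge1$.

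I would then transfer the analytic conclusion. Being of monomial type is preserved under each individual polar blow-up (a direct local computation: for the polar blow-up $\SSS^{n-1}\times[0,\infty)\to\R^n$ of a point, a monomial $x_1^{a_1}\cdots x_n^{a_n}\cdot(\textrm{unit})$ pulls back, in the obvious charts, to $\rho^{a_1+\cdots+a_n}$ times a monomial in the strict transforms $\{u_i=0\}$ times a unit), and monomial type is a condition that transfers between the projective and the real towers, since the two differ only in whether exceptional directions or half-directions are recorded. As $f\circ\pi$ is everywhere locally of monomial type, so is $f\circ\sigma$. Therefore $(f\circ\sigma)^{-1}(0)=E'_m\cup Z'$ is a normal crossings divisor with corners, where $Z'=\overline{\sigma^{-1}(Z\setminus\{0\})}$ is the strict transform of $Z$; in particular $Z'$ is a smooth real analytic surface with boundary and corners, transverse to $E'_m$. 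This gives (iv): for $L_i\neq L_j$, off $E'_m$ the strict transforms are isomorphic to $L_i,L_j$, which meet only at $0$, so $L'_i\cap L'_j\subset E'_m$; but a non-empty transverse intersection of the two surfaces $L'_i,L'_j$ in the $3$-manifold $M'_m$ would be a curve lying simultaneously in $L'_i$, in $L'_j$ and in a component of $E'_m$ --- three branches of a normal crossings divisor through one curve in dimension three, which is impossible. Hence $L'_i\cap L'_j=\emptyset$.

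For (i)--(iii), fix a local component $L$ of $Z$. By the above, $L'=\overline{\sigma^{-1}(L)}$ is a component of $Z'$, hence a real analytic surface with corners; it is compact (closed in the compact $\sigma^{-1}(\overline L)$), connected (closure of the connected $\sigma^{-1}(L)$), and orientable (coorientable in the orientable $M'_m$). Over $V\setminus\{0\}$ the map $\sigma$ is an isomorphism, so $\sigma^{-1}(L)$ meets no exceptional divisor; thus $\partial L'=\sigma^{-1}(C)\sqcup C_\infty$, where $C=L\cap\partial V\cong\SSS^1$, $\sigma^{-1}(C)\cong\SSS^1$, and $C_\infty:=L'\cap E'_m$, which by transversality is a compact, piecewise smooth analytic $1$-manifold and is non-empty since $\sigma^{-1}(L)$ is not compact. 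Now $L'$ is homotopy equivalent to its interior $\sigma^{-1}(L)\setminus\sigma^{-1}(C)=\sigma^{-1}(L\setminus C)$, which by the Conic Structure Theorem is homotopy equivalent to $\SSS^1$; hence $\chi(L')=0$. A compact connected orientable surface with non-empty boundary, zero Euler characteristic and at least two boundary components is a cylinder: so $L'\cong[0,1]\times\SSS^1$, which is (ii), and the remaining boundary circle $C_\infty$ is homeomorphic to $\SSS^1$, which is (iii). Finally $\sigma^{-1}(L)=L'\setminus C_\infty\cong[0,1)\times\SSS^1$, with $\{0\}\times\SSS^1$ corresponding to $C$, which is (i).

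It remains to show $E'_j\cong\SSS^2$ for every $j$, which I would prove by induction on $j$. The base case is $E'_1=\{0\}\times\SSS^2$, the exceptional divisor of the polar blow-up of a point of $\R^3$. For the step, $E'_{j+1}$ is the union of the strict transform of $E'_j$ and the new exceptional divisor $F_{j+1}=\sigma_{j+1}^{-1}(Y'_j)$; blowing up the components of $Y'_j$ one at a time we may assume $Y'_j$ connected, and by its normal crossings with $E'_j$ there are only finitely many local models for the pair $(Y'_j,E'_j)$ --- a point in the smooth part of a component, a point at a corner, a smooth curve inside one component, a corner curve, or a curve crossing corners transversally --- in each of which one checks directly that $F_{j+1}$ is a closed disk or a closed annulus and that removing a regular neighbourhood of $Y'_j$ from $E'_j\cong\SSS^2$ and gluing $F_{j+1}$ back along the resulting boundary circle(s) produces again a space homeomorphic to $\SSS^2$ (a sphere minus a disk, capped by a disk; or a sphere minus an annulus, i.e.\ two disks, reconnected by an annulus). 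This closes the induction. The step I expect to be the main obstacle is precisely this last bookkeeping --- keeping the total divisor homeomorphic to a sphere all the way up the tower of real blow-ups in the manifolds-with-corners category --- together with the care needed to make the transfer of ``monomial type'' and of transversality of the strict transform from the projective to the real picture fully rigorous.
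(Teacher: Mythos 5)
Your overall strategy coincides with the paper's: real-ify a projective reduction tower and read off the topology of the divisors and of the strict transform of $Z$. Your treatment of (i)--(iv) and of $E'_j\cong\SSS^2$ is if anything more detailed than the paper's (the Euler-characteristic argument for the cylinder, the disk/annulus surgery on the sphere, and your ``three normal-crossing components through a common curve'' contradiction for (iv) is the paper's own argument viewed from inside the divisor). The genuine problem is the very first step. You apply Theorem~\ref{th:reduction-singularities} to $f$ itself; but that theorem only constrains the centers to lie in $\Sing(df_j)$, and since (RIS) allows $f$ to be non-reduced, $\Sing(df)$ may contain all of $Z$ (take $f=g^2$), so nothing in the quoted statement keeps the centers over the origin. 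Your patch --- ``$f$ is already of monomial type off $0$, hence by standard functoriality the centers may be taken over $0$'' --- invokes a strengthening that is neither stated in Theorem~\ref{th:reduction-singularities} nor automatic: being locally a unit times a monomial in \emph{some} analytic coordinates at a point does not imply that a canonical monomialization algorithm is an isomorphism near that point (such algorithms are isomorphisms where their own invariant is already trivial, and typically keep blowing up until the function is monomial in \emph{exceptional} coordinates). This is not a cosmetic issue: confinement of the centers is precisely what makes $\sigma$ an isomorphism off $E'_m$ (used in your proofs of (i) and (iv)) and what guarantees that each blow-up modifies only the divisor (used in the induction giving $E'_j\cong\SSS^2$).

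The paper's fix, which you could adopt verbatim, is to apply Theorem~\ref{th:reduction-singularities} to the reduced function $h=Red(f)$ instead of to $f$. Then the theorem's own clause $Y_{j-1}\subset\Sing(dh_{j-1})$, together with (RIS) (i.e.\ $\Sing(dh)\subset\{0\}$) and the fact that each blow-up is an isomorphism off its exceptional divisor, forces $Y_0=\{0\}$ and $Y_j\subset E_j$ for $j\geq 1$ with no further input; and local monomiality of $h\circ\pi$ implies that of $f\circ\pi$, since each factor $f_i\circ\pi$ locally divides a unit times a monomial and is therefore itself a unit times a monomial. With that replacement the remainder of your argument goes through and agrees with the paper's proof, generally supplying more topological detail than the original.
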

\begin{proof}
Let $f=f_1^{n_1}f_2^{n_2}\cdots f_r^{n_r}$ be the prime decomposition of $f$ as a germ and put $h=Red(f)=f_1f_2\cdots f_r$. Let $\eps>0$ be sufficiently small such that $f$ is defined in a neighborhood of a closed ball $V=\overline{B}(0,\eps)$, and such that $Z$ cuts transversally the boundary of $V$.
Assume moreover that $V$ is contained in a
neighborhood where Theorem~\ref{th:reduction-singularities}
applies to $h$, so that we obtain a sequence of blow-ups $\pi$ as  in
(\ref{eq:blow-ups}) with centers $Y_0,Y_1,...,Y_{m-1}$, such that $ h \circ\pi$ is everywhere locally of monomial type.  Therefore,  the composition $f\circ\pi$ is also everywhere locally of monomial type.
Define the
sequence (\ref{eq:real-blow-ups}) recursively as follows: $\sigma_1:M'_1\to
V$ is the real blow-up of $V$ with center $Y'_0=Y_0$, $\sigma_2:M'_2\to
M'_1$ the real blow-up with center $Y'_1=Y_1\cap M'_1$, and so on.
 Since  $\Sing(dh)\subset\{0\}$, by the hypothesis (RIS),  and since the center
$Y_{j-1}$ of $\pi_j$ is contained in the singular locus of
$ h_{j-1}=h \circ \sigma_{1}\circ\cdots\circ\sigma_{j-1}$, we have that $Y'_0=\{0\}$ and that $Y'_j\subset E'_j$ for $j\geq 1$. We deduce then that
$E'_j\cong\SSS^2$ by recurrence on $j$, using the definition of real blow-up.

 Property \emph{(i)} is a consequence of the mentioned Conic Structure
Theorem, together with
 the fact that $\sigma:M'_{m} \setminus E'_{m} \to V \setminus \{0\}$ is
a diffeomorphism since each center $Y'_{j}$ is contained in $E'_{j}$  for $j\geq 0$.
To prove properties \emph{(ii)} and \emph{(iii)} we use the conclusion that
$\pi^{-1}(Z\cap V)$ is a normal crossing divisor,  so that
 $L'$ is contained in one of its components, a
non-singular analytic surface which cuts transversally the components of the total divisor $E'_m$. Finally, for property \emph{(iv)}, notice that if
 $L'_1,L'_2$ are the strict transforms of two different local components $L_1,L_2$ of $Z$ and
$L'_1\cap L'_2  \neq \emptyset$, then necessarily $L'_1\cap L'_2\not\subset E'_m$ (since $L'_1,L'_2$ and any component of $E'_m$ are components of the normal crossing divisor $\sigma^{-1}(Z\cap V)$). Hence $\sigma^{-1}(L_1)\cap\sigma^{-1}(L_2)\neq\emptyset$ and also $L_1\cap L_2\neq\emptyset$, which is impossible by the hypothesis (RIS).
\end{proof}
\begin{proposition}
\label{pro:simply-connected fibers}
Let $f:(\R^3,0)\to(\R,0)$ be a germ of   analytic function  satisfying the hypothesis (RIS).  Then there is a local component $L$ of the special  fiber $Z$ and a neighborhood base $\mathcal{B}$ of
$0\in\R^3$ such that each $U\in\mathcal{B}$  is compact and satisfies the  following property: there exists a family  $\{F^U_\lambda\}_{\lambda\in(0,\delta)}$, where $F^U_\lambda$ is a   connected component of a non-singular fiber of $f|_U$, such that $F^U_\lambda$ is homeomorphic to a closed disc and such that
$F^U_\lambda \xrightarrow{\lambda\rightarrow 0} (L\cup\{0\})\cap U$
in the Hausdorff topology.
\end{proposition}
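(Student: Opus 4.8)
The plan is to transport the statement, via the reduction of singularities of Proposition~\ref{cor:real-reduction}, to a question about the level sets of a monomial, and then to pick the right local component of $Z$ by a combinatorial argument on the sphere $E'_m$. First I would fix $\eps>0$ small, take the sequence of real blow-ups $\sigma\colon M'_m\to V=\overline{B}(0,\eps)$ provided by Proposition~\ref{cor:real-reduction}, and set $g:=f\circ\sigma$, which is everywhere locally of monomial type; let $\mathcal{D}:=g^{-1}(0)=E'_m\cup L'_1\cup\cdots\cup L'_r$ be its divisor, where $L'_1,\dots,L'_r$ are the strict transforms of the local components $L_1,\dots,L_r$ of $Z$ (and $r\geq 1$ since $Z\neq\{0\}$ by (RIS)). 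Because $g\equiv 0$ on $E'_m$, the fibre $g^{-1}(\lambda)$ is contained in $M'_m\setminus E'_m$ for every $\lambda\neq 0$, and there $\sigma$ is a diffeomorphism onto $V\setminus\{0\}$; hence $\sigma$ maps $g^{-1}(\lambda)$ diffeomorphically onto $f^{-1}(\lambda)$ and it suffices to work with $g$. Moreover, since $g$ is monomial, in normal crossing coordinates the logarithmic differentials $dx_i/x_i$ are linearly independent, so $dg$ cannot vanish along sequences approaching $\mathcal{D}$ from outside $\mathcal{D}$; thus the critical points of $g$ that lie off $\mathcal{D}$ stay at positive distance from $\mathcal{D}$ inside any compact set, and consequently, for a generic small $\delta>0$ and every small $\lambda\neq 0$, the set $g^{-1}(\lambda)\cap\sigma^{-1}(\overline{B}(0,\delta))$ is a compact smooth surface with boundary, i.e.\ $\lambda$ is a regular value of $f|_U$ for $U=\overline{B}(0,\delta)$. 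This settles the requirement that the fibres involved be non-singular.

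The choice of $L$ is combinatorial. By Proposition~\ref{cor:real-reduction} the $L'_i$ are pairwise disjoint closed cylinders, each meeting $E'_m$ transversally along a circle $C^{(i)}_\infty\subset E'_m$, and these $r$ circles are pairwise disjoint on $E'_m\cong\SSS^2$. They cut $E'_m$ into $r+1$ regions, and the graph whose vertices are those regions and whose edges are the circles $C^{(i)}_\infty$ is connected with $r+1$ vertices and $r$ edges, hence a tree; a tree with at least one edge has a leaf, so some region $R$ is adjacent to a single circle, say $C^{(i)}_\infty$, and then $R$ is a closed topological disc with $\partial R=C^{(i)}_\infty$. I set $L:=L_i$, $L':=L'_i$, and fix the side of $L'$ pointing towards $R$; after replacing $f$ by $-f$ if necessary we may assume that the relevant sign of the parameter is positive.

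The core step is the local analysis of $g$ along $L'\cup R$ for small $\lambda>0$. Along the interior of the cylinder $L'$, in coordinates with $L'=\{x_2=0\}$ one has $g=(\text{unit})\,x_2^{n_L}$, with $n_L\geq 1$ the multiplicity of $g$ along $L'$ (constant, since $L'$ is connected and $g$ is monomial); hence $g^{-1}(\lambda)$ has, near the interior of $L'$, a connected component $\widetilde{F}_\lambda$ that is a single parallel copy of $L'$ on the prescribed side and tends to $L'$ as $\lambda\to 0$. Following $\widetilde{F}_\lambda$ towards $C^{(i)}_\infty$, a factor $x_1^{b}$ with $b\geq 1$ coming from $E'_m$ enters the monomial expression of $g$, which forces $g^{-1}(\lambda)$ off $E'_m\cup L'$ there; so $\widetilde{F}_\lambda$ bends into the interior of $M'_m$ and continues, over and inside $R$, as a parallel pushoff of $R$ into $M'_m$ (threading harmlessly across the normal crossing corners of $E'_m$ contained in $R$, again because $g$ is monomial there, and embedded for $\lambda$ small since $R$ lies in the boundary hypersurface $E'_m$ and has trivial normal bundle in $M'_m$). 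As $R$ is bounded only by $C^{(i)}_\infty$, this pushoff has no further boundary and never meets another $C^{(j)}_\infty$, so $\widetilde{F}_\lambda$ is exactly the cylinder $L'$ with its end $C^{(i)}_\infty$ capped off by a disc, hence a topological disc, and it is a full connected component of $g^{-1}(\lambda)$ near $E'_m$. Setting $F^U_\lambda:=\sigma(\widetilde{F}_\lambda\cap\sigma^{-1}(U))$ for $U=\overline{B}(0,\delta)$ gives a closed disc with boundary on $\partial U$ which is a connected component of the non-singular fibre $f^{-1}(\lambda)\cap U$; and since $\widetilde{F}_\lambda\to L'\cup R$ while $\sigma(L')=L\cup\{0\}$ and $\sigma(R)=\{0\}$, we get $F^U_\lambda\to(L\cup\{0\})\cap U$ in the Hausdorff topology as $\lambda\to 0$. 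Letting $\delta$ run through a sequence tending to $0$ produces the neighbourhood base $\mathcal{B}$.

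The main obstacle is this last step: one has to organise a coherent atlas of monomial charts along the whole of $L'\cup R$ — in particular to control the behaviour over the corner points of the exceptional divisor met by $C^{(i)}_\infty$ or contained in $R$ — and to verify rigorously that the parallel pushoff of $R$ is embedded and genuinely caps the open end of the cylinder $L'$ without touching the rest of $\mathcal{D}$, so that $\widetilde{F}_\lambda$ is a whole connected component and is a disc. The reduction to $g$, the regularity of the fibres, the combinatorial choice of $L$ and the Hausdorff convergence are comparatively routine once this local picture is in place.
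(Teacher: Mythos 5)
Your reduction to the blown-up monomial function $g=f\circ\sigma$, the combinatorial choice of $L$ (your tree--leaf argument picks exactly the same component as the paper, which chooses $j_0$ so that one component $D$ of $E'_m\setminus (L'_{j_0}\cap E'_m)$ contains no other circle, so that $\Omega=L'_{j_0}\cup D$ is a closed disc), and the final Hausdorff-convergence bookkeeping all follow the paper's route. But the argument has a genuine gap, and it is precisely the one you flag yourself: the assertion that near $\Omega=L'\cup R$ the level set $\{g=\lambda\}$ on the chosen side is an embedded ``parallel pushoff'' of $\Omega$, that it ``threads harmlessly across the normal crossing corners'', caps the open end of the cylinder, and constitutes a whole connected component of the fibre. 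Nothing in your text proves this; the local monomial models ($g=x^m$, $g=x^my^n$, $g=x^ay^bz^c$ in a quadrant/octant) do suggest a rounding-of-corners picture, but gluing these local pictures into a global homeomorphism from the fibre piece onto the disc $\Omega$, and showing that this piece neither breaks up nor attaches to other parts of the divisor, is the entire content of the proposition. Declaring it ``the main obstacle'' does not discharge it.

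For comparison, the paper spends essentially all of its proof on exactly this step, and the tools it needs are not routine: it takes an analytic Riemannian metric on $M'_m$ (Grauert), forms the gradient field $\xi=-\nabla_g(\wt{f}^2)$, invokes \L ojasiewicz's theorem to get a continuous retraction $R_\xi$ of a neighbourhood of the divisor onto it, and then proves a Claim that through every point $q\in\Omega$ there is, locally, a \emph{unique} orbit of $\xi$ in the adjacent region $K$ accumulating to $q$. The uniqueness (which is what makes $R_\xi$ restricted to a nearby fibre a bijection onto $\Omega'$, hence a homeomorphism onto a disc) is proved by a case analysis on the number of divisor components through $q$: transversality for $e=1$, the Theorem of Reduction to the Center Manifold plus a cone argument for $e=2$, and, at the corner points ($e=3$), the Kurdyka--Mostowski--Parusi\'nski solution of Thom's gradient conjecture together with Moussu's theorems on limit tangents of gradient orbits and a further real blow-up at the corner. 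Your ``coherent atlas of monomial charts along $L'\cup R$'' would have to reproduce a substitute for this uniqueness/embeddedness statement, in particular at the triple points of $E'_m$ contained in $R$, and no such substitute is given. There are also smaller loose ends (taking $U$ to be a round ball requires checking that the fibre component meets $\partial U$ transversally and only along one circle, which is why the paper builds a bespoke compact neighbourhood $U=\sigma(\wt U)$ out of $R_\xi^{-1}(\Omega')\cap\wt f^{-1}([0,\delta])$), but these are minor next to the missing core argument.
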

\begin{proof}
We prove that any closed ball $V=\overline{B}(0,\eps)$, with $\eps>0$ sufficiently small for which Proposition~\ref{cor:real-reduction} holds, contains a neighborhood $U$ with the required properties of the statement.
We use  notation  of Proposition~\ref{cor:real-reduction}
so that, if $L_1,...,L_r$ are the local components of the singular fiber $Z=f^{-1}(0)$ and
$L'_j$ is the strict transform of $\overline{L_j}$,
then $L'_j$ is homeomorphic to the cylinder $[0,1]\times\SSS^1$ and  $L'_j\cap E'_m$ is a curve homeomorphic to $\SSS^1$.  Moreover, $L'_i\cap L'_j\cap E'_m=\emptyset$ if $i\neq j$. Let $j_0$ be such that  one of the connected components of $E'_m\setminus L'_{j_0}\cap E'_m$, say $D$, contains no curve $L'_j\cap E'_m$ for $j\neq j_0$. Then
$\Omega=L'_{j_0}\cup D$ is homeomorphic to a closed disc.

\medskip


\begin{center}
\begin{overpic}[scale=.7]
{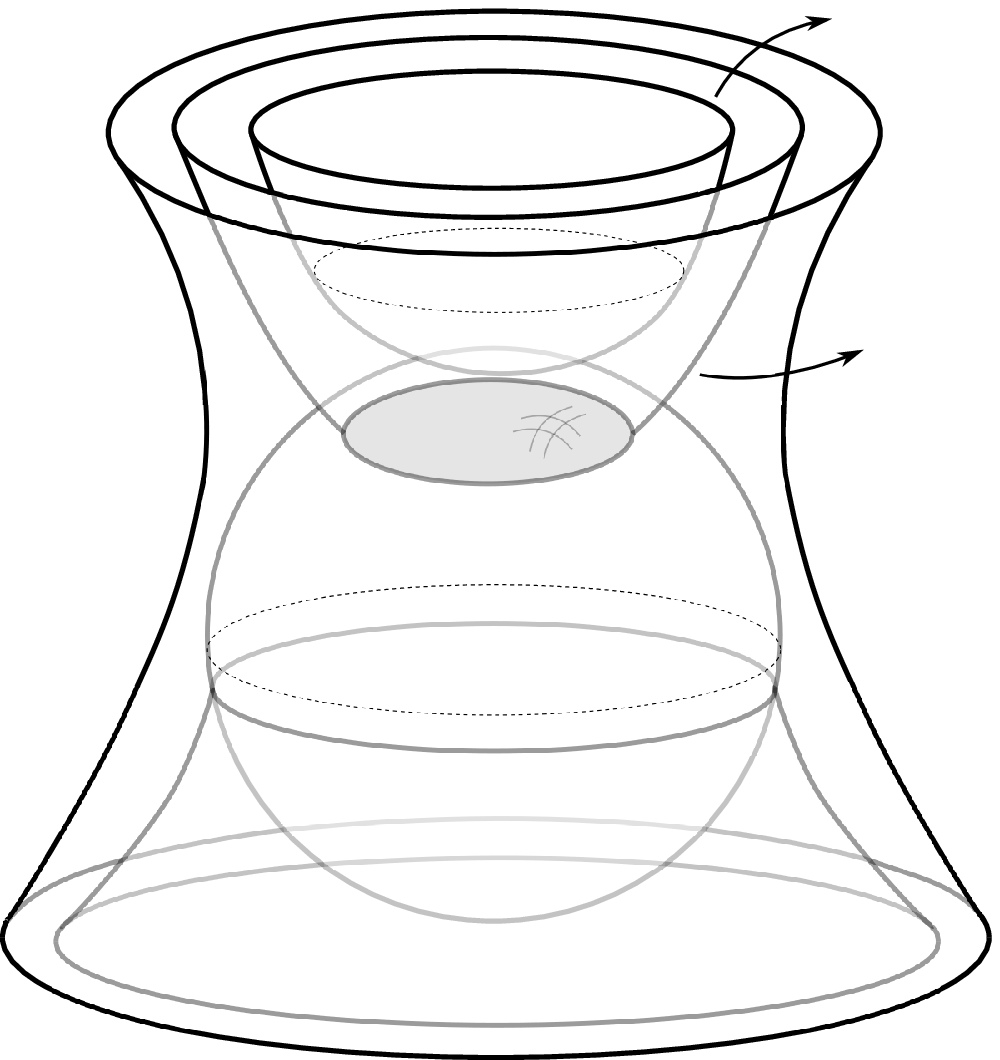}
\put (83,124) { $D$}
\put (173,141) { $L_{j_{0}}'$}
\put (170,208) {$\Delta_{\delta}$}
\end{overpic}
\end{center}

\medskip

Let us prove the statement for the component $L=L_{j_0}$.
Consider
$\wt{f}=f\circ\sigma:M'_m\to\R$, whose singular fiber is given by
$$
\wt{Z}=\wt{f}^{-1}(0)=\sigma^{-1}(Z\cap V)=L'_1\cup L'_2\cup \cdots \cup L'_r\cup E'_m,
$$
and thus $\Omega\subset\wt{Z}$. By construction, there is a unique connected component of $M'_m\setminus\wt{Z}=\sigma^{-1}(V\setminus f^{-1}(0))$,  denoted by $K$, whose  topological frontier  in $M'_m$ is exactly $\Omega$. We assume, without lost of generality, that $\wt{f}$ is positive on $K$.  Denote also by
$\dot{M}'_m=\sigma^{-1}(V\setminus\partial V)$ (a manifold with boundary where $\partial\dot{M}'_m=E'_m$).

 Let $g$ be an analytic
riemannian metric on $M'_m$ (the existence of such a metric is
guaranteed by Grauert's Analytic Immersion Theorem \cite{Gra}) and let $\xi=-\nabla_g(\wt{f}^2)$ be
the gradient vector field of $\wt{f}^2$ with respect to $g$. The
square and the sign ``$-$'' are taken  in order  to guarantee both that $\wt{f}$
decreases along any trajectory of $\xi$ and that  $\wt{Z}$ is
exactly the singular locus of $\xi$.  By a \L ojasiewicz's result
(see \cite{Loj}),  there exists an open neighborhood $H$ of $\wt{Z}\cap \dot{M}'_m$ in $\dot{M}'_m$ such that for any $p\in H$, the  integral curve $\g_p$ of $\xi$ with $\g_p(0)=p$ is defined on $[0,\infty)$ and the limit
$$
R_\xi(p)=\lim_{t\to\infty}\g_p(t)
$$
exists and belongs to $\wt{Z}\cap\dot{M}'_m$. Moreover, the map $R_\xi:H\to\wt{Z}\cap\dot{M}'_m$ is a continuous retraction.

  In our particular case where $\wt{f}$ is locally of monomial type, one can show, moreover, the following:

  \begin{claim}
  For any $q\in\Omega\cap  \dot{M}'_m$, there exists a neighborhood $B_q$ of $q$ and a unique orbit of $\xi$ in $B_q\cap K$ that accumulates to $q$.
  \end{claim}

Assume that the Claim is true. Put $\Omega'=\Omega\cap\sigma^{-1}(\overline{B}(0,\eps/2))$.
Using the existential part of the Claim, the fact that $\wt{f}$ decreases along integral curves
of $\xi$ and the compactness  of $\Omega'$, there exists a  fiber $\Delta_\delta=\wt{f}^{-1}(\delta) \cap K$
of $\wt{f}|_K$, for some $\delta>0$, such that  $\Omega' \subset R_\xi(\Delta_\delta)$.
(Notice that fibers of $\wt{f}|_K$ and of $\wt{f}^2|_K$ coincide since we have assumed that
$\wt{f}$ is positive on $K$).
On the other hand,   by  the uniqueness property stated in the Claim
 and  since  an orbit  of $\xi$ can intersect at most once any fiber of $\wt{f}$, if $\wt{F}_{\delta}=R_\xi^{-1}(\Omega')\cap \Delta_\delta$  then $$R_\xi|_{\wt{F}_{\delta}}:\wt{F}_{\delta}\to\Omega'$$
 is bijective, and hence a homeomorphism.  Observe that all the conclusions above also hold for any $\lambda$ with $\lambda \in (0,\delta]$, using the flow of $\xi$, which provides, by restriction, a diffeomorphism from $\wt{F}_\delta$ to $\wt{F}_{\lambda}$   for  every such $\lambda$. In particular, $\wt{F}_{\lambda}=R_\xi^{-1}(\Omega')\cap \Delta_{\lambda}$   is homeomorphic to a closed disc for  any $\lambda \in (0,\delta]$.

 We finally consider   the set
$$
\wt{U}=( M'_m \setminus K)\cup(R_\xi^{-1}(\Omega')\cap
 \wt{f}^{-1}([0,\delta])\cap\overline{K}
),
$$
which is a  compact  neighborhood of the total divisor $E'_m$ in $M'_m$, and we put $U=\sigma(\wt{U})$.  Then $U$ is a neighborhood of the origin contained in $V$ with the required properties  for the family $\{F^U_\lambda=\sigma(\wt{F}_\lambda)\}_{\lambda\in(0,\delta]}$.

\medskip

\noindent {\em Proof of the Claim.}
Fix $q\in\Omega\cap \dot{M}'_m$ and denote by $e=e(q)$ the number of components of $\wt{Z}$, considered  as a normal crossing divisor, which meet at $q$. We analyze separately the three possible values of $e \in \{1,2,3\}$. First, it is worth   recalling  that the expression of the vector field $\xi=-\nabla_g\wt{f}^2=A\partial/\partial x+B\partial/\partial y+C\partial/\partial z$ in analytic coordinates $w=(x,y,z)$ at $q$ is computed by the formula
\begin{equation}\label{eq:expression-xi}
\left(A\; B\; C\right)=
-\frac{\partial\wt{f}^2}{\partial w}
\left( h_{ij}
\right),
\end{equation}
where $(h_{ij})$ is the inverse of the matrix of the metric $g$ in the coordinates $w$ and $ \partial\wt{f}^2 / \partial w $ is the row vector of partial derivatives of $\wt{f}^2$.

\strut

{\em Case $e=1$.} We choose an analytic chart $(B_q,(x,y,z))$ centered at $q$ so that $\wt{f}=x^m$, with $m>0$, and $B_q\cap K=\{x>0\}$. Inside the domain $B_q$ of the chart, using (\ref{eq:expression-xi}), we may write $\xi=x^{2m-1}\bar{\xi}$, where $\bar{\xi}$ is a vector field, which is  non-singular at $q$. Moreover, $\bar{\xi}$ is transversal to $\wt{Z}=\{x=0\}$ in a neighborhood of $q$. Thus, the orbit of $\bar{\xi}$ through $q$ is the unique orbit that may accumulate to $q$ and   cuts $\{x>0\}$. Since orbits of $\xi$ in $\{x>0\}$ are contained in  orbits of $\bar{\xi}$, we conclude the claim.

\strut

{\em Case $e=2$.} In this case, we choose an analytic chart $(B_q,(x,y,z))$ such that $\wt{f}=x^my^n$, with $m,n>0$, and $B_q\cap  K=\{x>0,\,y>0\}$. Then, using (\ref{eq:expression-xi}), we may write $\xi=2x^{2m-1}y^{2n-1}\bar{\xi}$, where
\begin{equation}
\label{grad-field}
\bar{\xi}=-(myh_{11}+nxh_{21})\frac{\partial}{\partial x}-(myh_{12}+nxh_{22})\frac{\partial}{\partial y}.
\end{equation}
Since   the  orbits of $\xi$ and $\bar{\xi}$ coincide in $B_q\cap K$, it suffices to prove the Claim for $\bar{\xi}$. Using the fact that $(h_{ij})$ is positive definite, we have that $\Sing(\bar{\xi})=\{x=y=0\}$ and that the linear part $D\bar{\xi}(q)$ of $\bar{\xi}$ at $q$ has (real) eigenvalues $\{0,\lambda,\mu\}$, where $\lambda<0<\mu$.
Let $W^s,W^u$ be the stable and unstable manifolds of $\bar{\xi}$ at $q$. They are invariant smooth curves (in fact real analytic separatrices of $\bar{\xi}$, see \cite{Car-S}) tangent to the eigendirections $E_\lambda,E_\mu$ corresponding to $\lambda$ and $\mu$, respectively. Also, $W^c=\Sing(\bar{\xi})$ is a center manifold  of $\bar{\xi}$ at $q$. The {\em Theorem of  Reduction to the Center Manifold} (see \cite{Hir-P-S,Carr}) implies that $\bar{\xi}$ is topologically equivalent, in a neighborhood of $q$, to the linear vector field $D\bar{\xi}(q)=\lambda u\partial/\partial u+\mu v\partial/\partial v$ in $\R^3$, where $u$ and $v$ are linear  coordinates on $E_\lambda$ and $E_\mu$, respectively.
As a consequence, the four connected components of $(W^s\cup W^u)\setminus\{q\}$ are the unique non-trivial orbits of $\bar{\xi}$ which accumulate to $q$.
It suffices to show that $(W^u\cup W^s)\cap K =W^s\cap K\neq\emptyset$ (notice that in that case only one of the components of $W^s\setminus\{q\}$ may be contained in  $K$, since $W^s,W^u$ are transversal  at $q$ to the  components $\{x=0\}$ and $\{y=0\}$ of $\wt{Z}$,  both contained in $Fr(K)$).

 Let us  show  that $W^u\cap  K=\emptyset$. Notice that the integral curve of $\bar{\xi}$ at any point of $W^u\setminus\{q\}$ is defined in an interval of the form $(-\infty,a)$ and converges to $q$ for $t\to - \infty$. This would also be the case for an integral curve of $\xi$ if $W^u\cap K \neq\emptyset$, since the sense of parametrization of integral curves of $\xi$ and $\bar{\xi}$ coincide in $B_q\cap K$. However, this is impossible because $-\wt{f}^2$ grows along integral curves of $\xi$ in  $K$ and $\wt{f}(q)=0$.

 Let us show  that $W^s\cap H\neq\emptyset$. Denote by $\Delta=E_\lambda\oplus E_\mu$, a linear plane invariant  by  the linear vector field $D\bar{\xi}(q)$. Let $Q$ be the cone inside $\Delta$ bounded by the half-lines $\ell_x,\ell_y$  of $\Delta$ which correspond to the tangent directions of  $\{y=0,x\geq 0\}\cap\Delta$ and $\{x=0,y\geq 0\}\cap\Delta$, respectively. If $W^s\cap  K =\emptyset$ then we would have $Q\cap(E_\lambda\cup E_\mu)=\{0\}$. In this case, we could see that the vector field $D\bar{\xi}(q)$,  that is everywhere transversal to  the boundary of $Q$, {\em enters}   $Q$ through one of  the half-lines $\ell_x,\ell_y$  while it {\em escapes} from $Q$ through the other one. This is impossible by comparing  $D\bar{\xi}(q)$ with $\bar{\xi}$, since this last vector field  escapes from $K$ through any point of $Fr(K)\setminus\{x=y=0\}=\{y=0,x>0\}\cup\{x=0,y>0\}$.

\strut

{\em Case $e=3$.} We use the result  by Kurdyka et al. \cite{Kur-M-P}  that solves Thom's Con\-jec\-ture  : Let $h:(\R^n,0)\to\R$ be an analytic function and let $g$ be a real analytic riemannian metric at $0$. Then any non-trivial orbit $\G$ of  the  analytic gradient vector field $\nabla_gh$ that converges to  $0\in\R^n$ has a well defined  limit  tangent
$$
\nu_\G=\lim_{x\in\G,x \to 0}\frac{x}{\|x\|}\in\mathbb{S}^{n-1}.
$$
Also, we use the following results from Moussu's paper \cite[Theorems 1 and 3]{Mou}: 
if $g(0)$ is the Euclidean metric in $\R^n=T_0\R^n$ and $H_k=h_k|_{\mathbb{S}^{n-1}}$, where $h_k$ is the first non-zero homogeneous polynomial (of degree $k$) in the Taylor expansion of $h$ at $0\in\R^n$, then we have:
\begin{enumerate}[(a)]
  \item If $\nu_\G\in\SSS^{n-1}$ is the  limit  tangent of an orbit $\G$ of $\nabla_gh$ that converges to the origin, then $\nu_\G\in\Sing(d H_k)$.\footnote{This result, essentially, was already established by Martinet and Thom.}
  \item Assume that $h\geq 0$ in a neighborhood of the origin and denote by $S_0\subset\SSS^{n-1}$ the set of points $\nu\in\SSS^{n-1}$ that satisfy
      $$
      \nu\in\Sing(dH_k),\;\;H_k(\nu)<0,\;\;kH_k(\nu)<\inf\{\lambda_1(\nu),...,\lambda_{n-1}(\nu)\},
      $$
      where the $\lambda_j(\nu)$ are the eigenvalues of the hessian matrix of $H_k$ at $\nu$ (with respect to the standard metric in $\SSS^{n-1}$).  Then, for  any $\nu\in S_0$ there exists a unique orbit $\G$ of $\nabla_gh$ converging to the origin such that $\nu=\nu_\G$ (in fact, $\G$ is an analytic separatrix of $\nabla_gh$).
\end{enumerate}
In order to apply these results to our  gradient vector field  $\xi=\nabla_g(-\wt{f}^2)$, we consider an analytic chart $(B_q,w=(x,y,z))$, centered at $q$, such that the matrix of the metric $g$ in the coordinates $w$ at $w=0$ is equal to the identity and, moreover, $\wt{f}$ is written in the form
$
\wt{f}=\ell_1^{m_1}\ell_2^{m_2}\ell_2^{m_3},
$
where $\ell_1,\ell_2,\ell_3$ are  linearly independent homogeneous polynomials of degree one  in the variables $(x,y,z)$. To show  that we can choose such an analytic chart,    first take coordinates $\bar{w}=(\bar{x},\bar{y},\bar{z})$ so that $\wt{f}=\bar{x}^{m_1}\bar{y}^{m_2}\bar{z}^{m_3}$ and then take a linear change of variables $w=P\bar{w}$ so that $g(0)$ has the identity matrix in the coordinates $w$.
We may assume also that   $K\cap B_q$    is described by the set  $\{\ell_1,\ell_2,\ell_3>0\}$  and that the range $w(B_q)$ of the chart contains a neighborhood of $[-1,1]^3$, so that the sphere $\SSS^2=\{x^2+y^2+z^2=1\}$ in the coordinates $w$ is well defined as a ``sphere'' inside $B_q$.  Consider
$$
F=(-\ell_1^{2m_1}\ell_2^{2m_2}\ell_2^{2m_3})|_{\SSS^2},
$$
 an  analytic non-constant function on $\SSS^2$. According to Moussu's results (a) and (b) above, it suffices to prove that:
\begin{enumerate}[(i)]
  \item $F$ has a unique singular point  $\nu_0$  in $\SSS^2\cap K$, which is a local minimum for $F$ (thus the hessian of $F$ at  $\nu_0$  is  positive semidefinite  and hence  $\nu_0$ belongs to the set $S_0$  defined in (b) above).
  \item No point of the frontier of $\SSS^2\cap K$ in $\SSS^2$ can be the limit  tangent of an orbit of $\xi$ contained in $K$.
\end{enumerate}
 Property  (i)  is an exercise in convex geometry:  For any $c>0$, the function $\tilde{f}^{2}-c=\ell_1^{2m_1}\ell_2^{2m_2}\ell_3^{2m_3}-c$ restricted to $K=\{\ell_1,\ell_2,\ell_3>0\}$ is such that its epigraph $\{\tilde{f}^{2}\geq c\}\cap K$ is strictly convex. Thus, if $\nu_0\in\mathbb{S}^2$ is a singular point of $\tilde{f}^{2}|_{\mathbb{S}^2\cap K}$ then the tangent plane of $\mathbb{S}^2$ at $\nu_0$ equals the tangent plane of the fiber $(\tilde{f}^{2})^{-1}(\tilde{f}(\nu_0)^{2})$ at $\nu_0$ and separates $\mathbb{S}^2$ from the epigraph $\{\tilde{f}^{2}\geq\tilde{f}(\nu_0)^{2}\}$. Thus $\nu_0$ is a global maximum of $\tilde{f}^{2}$ in restriction to $\mathbb{S}^2\cap K$, which shows (i).


 Let us show (ii). The set $T=\SSS^2\cap\overline{K}$ is a  spherical triangle $T$ determined by the lines $\ell_j\cap\SSS^2$, $j=1,2,3$. We consider the real blow-up $\sigma_q:\wt{M}\to M'_m$ at $q$ so that the divisor $E=\sigma_q^{-1}(q)$ is identified with the sphere $\SSS^2$. The transformed vector field $\sigma_q^*\xi$ is singular along $E$ but it can be divided by an equation of $E$ so that we obtain a new vector field $\wt{\xi}'$ on $\wt{M}$, which leaves invariant the divisor $E$ and so that $E\not\subset\Sing(\wt{\xi}')$. A calculation (which, this time, is easier assuming that the coordinates are chosen so that $\wt{f}=x^{m_1}y^{m_2}z^{m_3}$) shows that  $\Sing(\wt{\xi}')\cap T$ is the set of vertices of $T$.  This proves that if an orbit of $\wt{\xi}'$, not contained in the divisor $E$, accumulates to a single point  of $T$, then this point must be a vertex. On the other hand,  if $v$  is a vertex of $T$, one can see that $\rm{Sing}(\wt{\xi}')$ is a non-singular curve at $v$ transversal to the divisor $E$ and that the restriction of $\wt{\xi}'$ to $E$ is a linear  vector field (in standard charts for the blow-up) with real eigenvalues of different sign. Thus the stable and unstable manifolds of $\wt{\xi}'$ at $v$ are contained in $E$, whereas  $\rm{Sing}(\wt{\xi}')$ is a center manifold. Using the Theorem of Reduction to the Center Manifold in a way analogous to the case $e=2$, we conclude  that no orbit of $\wt{\xi}'$ outside $E$ can accumulate to $v$. This proves  (ii), as wanted.
\end{proof}

\section{The Poincar\'e-Hopf  index}\label{sec:index}

Let $f:(\R^3,0)\to\R$ be a germ of   analytic function which satisfies the hypothesis (RIS) and let $\varepsilon>0$ be sufficiently small so that Proposition~\ref{cor:real-reduction} holds for $f$. Put $Z=f^{-1}(0)\cap\overline{B}(0,\varepsilon)$ and let $L$ be one of the  local components  of $Z$. Consider in $L$ the orientation induced by the normal vector field $\nabla f|_{L}$. Let $C = L \cap \partial \overline{B}(0,\eps) \cong \SSS^{1}$ be the corresponding link with  the usual orientation as a boundary of $L$.
By the conic structure of $L$, there exists a homeomorphism $\Psi: \overline{L} \to \D$, where $\D$ is the unit closed disc in $\R^2$ centered at the origin, such that $\Psi(0) =0$ and which restricts to a diffeomorphism  from $L$ into $\D\setminus\{0\}$. We can suppose that $\Psi$ is
orientation preserving. Moreover, we can suppose that the tangent map $T\Psi:TL\subset T\R^3\to T\R^2$ of $\Psi$ over $L$ is uniformly bounded  for the usual norm of tangent vectors of $\R^n$  (for instance, change $\Psi$ by $g(\|\Psi\|)\Psi$ where $g:[0,1]\to[0,1]$ is a convenable monotonic $C^1$-function). Thus
$\Psi_{*}(X|_{L})$ extends to a continuous vector field
 $\wt{X}_L$  on the disc $\D$ with isolated singularity at $0$.
We define the {\em index of $X$  along  $L$}, denoted by  $I_{L}(X)$, to be the Poincar\'e-Hopf index of  $\wt{X}_L$  at the origin  of $\R^2$.
It can be computed as the degree of the map
$  \wt{X}_L /||  \wt{X}_L ||:\SSS^{1} \to \SSS^{1}$.
It is well known that $I_{L}(X)$ does not depend on $\eps$ or on the homeomorphism $\Psi$ (as long as it satisfies the mentioned properties).

\begin{proposition}
\label{prop-index}
 Let $X$ be a real analytic vector field at $(\R^{3},0)$
having a real-analytic first integral $f$. Assume that $f$ satisfies the hypothesis (RIS) and that $X$ has an isolated singularity at $0\in\R^3$. Then there exists
a local component $L$ of   $Z=f^{-1}(0)$
such that $I_{L}(X) = 0$.
\end{proposition}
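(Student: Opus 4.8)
The strategy is to transport the vector field $X|_L$ to the nearby regular fibers provided by Proposition~\ref{pro:simply-connected fibers} and use the homotopy invariance of the Poincaré--Hopf index together with the fact that those fibers are discs (on which a nonvanishing vector field forces index arguments to degenerate). First I would invoke Proposition~\ref{pro:simply-connected fibers} to select the distinguished local component $L$ and the neighborhood base $\mathcal B$, so that for each small compact $U$ there is a family $\{F^U_\lambda\}_{\lambda\in(0,\delta)}$ of connected components of regular fibers of $f|_U$, each homeomorphic to a closed disc, with $F^U_\lambda\to (L\cup\{0\})\cap U$ in the Hausdorff topology as $\lambda\to 0$. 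The point is that $X$ is tangent to every fiber of $f$ (since $df(X)=0$), hence $X|_{F^U_\lambda}$ is a vector field on the disc $F^U_\lambda$, and since $X$ has an isolated singularity at $0$ and $0\notin F^U_\lambda$, this vector field is nonvanishing on $F^U_\lambda$ — in particular nonvanishing on the interior, so it has no interior singularities.

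Next I would compare the boundary behaviour of $X$ on $\partial F^U_\lambda$ with that on the link $C=L\cap\partial V$. Using the flow of the gradient-type vector field $\xi$ from the proof of Proposition~\ref{pro:simply-connected fibers} (or the retraction/homeomorphism $R_\xi$ constructed there), one gets, for $\lambda$ small, a homeomorphism $\Phi_\lambda$ between $F^U_\lambda$ and $\Omega'=(L\cup\{0\})\cap\sigma(\overline{B}(0,\eps/2))$, hence between $\partial F^U_\lambda$ and a curve isotopic to $C$; moreover this homeomorphism can be taken so that, along the boundary, the direction field of $X$ on $F^U_\lambda$ converges, as $\lambda\to 0$, to the direction field of $X|_L$ on $C$ (both measured after pushing forward by the disc-uniformizing homeomorphism $\Psi$ of Section~\ref{sec:index}). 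This is where continuity of $X$ and the uniform control on $T\Psi$ are used: the normalized vector field $\wt X_L/\|\wt X_L\|$ on the link is a uniform limit of the normalized restrictions $X|_{\partial F^U_\lambda}/\|\cdot\|$, so the degrees of these maps $\SSS^1\to\SSS^1$ coincide for $\lambda$ small. Therefore $I_L(X)$ equals the Poincaré--Hopf index of $X|_{F^U_\lambda}$ computed on the disc $F^U_\lambda$.

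Finally, since $X|_{F^U_\lambda}$ is a continuous vector field on the closed disc $F^U_\lambda$ with \emph{no zeros at all} (not even in the interior), its index relative to the boundary is zero: the map $X|_{\partial F^U_\lambda}/\|X|_{\partial F^U_\lambda}\|:\SSS^1\to\SSS^1$ extends continuously over the whole disc (namely by $X/\|X\|$ itself), hence is null-homotopic and has degree $0$. Combining with the previous paragraph gives $I_L(X)=0$, as claimed.

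The main obstacle I anticipate is the second step: making precise the assertion that the normalized direction field of $X$ on $\partial F^U_\lambda$ converges \emph{uniformly} to that of $X|_L$ on $C$ as $\lambda\to 0$. Hausdorff convergence of the fibers alone is not enough — one needs the boundary curves to converge with their \emph{tangent directions}, which is exactly what the analytic structure (monomial-type normal form of $f\circ\sigma$, the explicit local models of $\xi$ near $\wt Z$ in the cases $e=1,2,3$ from the proof of Proposition~\ref{pro:simply-connected fibers}, and the retraction $R_\xi$ being a local diffeomorphism transverse to $\wt Z$ away from corners) is there to guarantee. Concretely, lifting to $M'_m$, the strict transform $L'$ meets the divisor transversally along the smooth curve $C_\infty$, and the fibers $\wt F_\lambda$ approach $L'$ through a tubular-neighborhood foliation, so the boundary $\partial\wt F_\lambda$ is $C^0$-close with tangents to $C_\infty$; pushing down by $\sigma$ and then by $\Psi$ transfers this to the required uniform convergence of direction fields on $\SSS^1$, at which point homotopy invariance of the degree closes the argument.
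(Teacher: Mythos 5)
Your overall strategy is the same as the paper's: select $L$ via Proposition~\ref{pro:simply-connected fibers}, transport the index computation to nearby regular fibers by homotopy invariance of the degree, and conclude $I_L(X)=0$ because $X$ is tangent to the fibers and nonvanishing on a disc-shaped component. The gap is in the transport step, exactly where you flag the obstacle, and the remedy you sketch does not close it. Proposition~\ref{pro:simply-connected fibers} only provides Hausdorff convergence of the components $F^U_\lambda$ and a \emph{continuous} retraction $R_\xi$; it gives no $C^1$ control, so you may not push the vector field forward by the homeomorphism $R_\xi|_{\wt F_\lambda}$, and the asserted uniform convergence of the normalized field along $\partial F^U_\lambda$ to $\wt X_L/\|\wt X_L\|$ on the link is not available from that proposition. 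Moreover, your concrete fix is aimed at the wrong boundary: the transversality of $L'$ with the divisor along $C_\infty$ concerns the divisor-side boundary of $L'$, whereas the comparison with the link must be carried out along the outer boundary $\sigma^{-1}(C)$ (the curves $\partial\wt F_\lambda$ accumulate on the boundary of $\Omega'$ over the sphere of radius $\eps/2$, far from $E'_m$), and in any case ``convergence with tangents'' of these boundary curves is nowhere established.

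The paper closes this gap by a device that avoids $\partial F^U_\lambda$ altogether. Near the link $C$, using (RIS) and the prime decomposition of $f$, one writes $f|_W=\pm\beta^m$ where $\beta=f_j\prod_{k\neq j}|f_k|^{n_k/m}$ is a submersion on a neighborhood $W$ of $C$; the flow $\phi_t$ of $Y=\nabla\beta/\|\nabla\beta\|^2$ is transversal to the fibers and carries $C$ smoothly onto curves $C_t=\phi_t(C)$ inside the regular fibers $\{f=t^m\}$, giving \emph{diffeomorphisms} $\Phi_t$ from a fixed annular neighborhood $A$ of $C$ in $L$ onto neighborhoods $A_t$ of $C_t$ in the fibers. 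Setting $\Psi_t=\Psi\circ\Phi_t^{-1}$ and $\wt X_t=\Psi_{t\,*}(X|_{A_t})$, the map $s\mapsto\wt X_{st}$ is a genuine homotopy, nonvanishing over $\SSS^1$ for small $t$ by continuity of $X$, so the boundary degrees of $\wt X_L$ and $\wt X_t$ coincide; Proposition~\ref{pro:simply-connected fibers} is then used only to guarantee that $C_t$ lies in a simply connected component of the fiber, hence bounds a disc $D_t$ there, on which $X$ is nonvanishing, and pushing forward by a disc uniformization gives degree zero, whence $I_L(X)=0$. If you replace your second step by this construction near the link (or otherwise produce a $C^1$ identification there), your argument becomes complete; as written, the key convergence claim is unsupported.
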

\begin{proof}
Take  a  local component $L$ of $Z$ satisfying the properties stated  in Proposition~\ref{pro:simply-connected fibers}, i.e., in every neighborhood of $L$ there are fibers of $f$ that have connected components which are simply connected. Assume,
 without loss of generality, that there are such fibers with positive values of $f$.  Consider a diffeomorphism  $\Psi:L\to\D\setminus\{0\}$   and the vector field $\wt{X}_L=\Psi_\ast(X|_L)$  as in the paragraph above, so that $I_L(X)$ is the Poincar\'{e}-Hopf index of $\wt{X}_L$ at $0\in\R^2$.

We shift the link $C= L\cap \partial\overline{B}(0,\eps)$ of $ L $ to  nearby fibers of $f$  in the following way. Notice first that, by the condition (RIS),  if $f=f_1^{n_1}\cdots f_r^{n_r}$ is the decomposition of $f$ in irreducible factors, there is a unique $j$ so that $f_j$ vanishes along $C$ and, if $k\neq j$, then $f_k(x)\neq 0$ for any $x\in C$. Let $m=n_j$. In a sufficiently small neighborhood $W$ of $C$ in $\R^3$, the function $\beta:W\to\R$ defined by
$$
\beta=  f_j\prod_{k\neq j}|f_k|^{\frac{n_k}{m}}
$$
satisfies ${\rm Sing}(d\beta)=\emptyset$, $W\cap L=\{\beta=0\}$ and $f|_W= \epsilon \beta^m$, where $\epsilon=\pm 1$. Up to changing the sign of $\beta$ we can assume that $\epsilon=+1$ (notice that $\epsilon=-1$ and $m$ even cannot occur since we suppose that $f$ takes positive values near $L$). Notice that  the fibers of $\beta$ are contained in the fibers of $f|_W$. Put $Y=\nabla \beta / \| \nabla  \beta\|^{2}$, a vector field which is
 transversal to the fibers of $f$  in $W$, in particular to  $L\cap W$. Moreover, if $\phi_{t}(x)$
is the flow of $Y$, we have   $\beta(\phi_{t}(x)) = t$ for every $x \in  L\cap W$ and every $t \in \mathbb{R}$ sufficiently
small. Let us denote by $L_{t}$ the fiber $f=  t^m$  in $\overline{B}(0,\varepsilon)$  and by $C_{t}$ the curve $\phi_{t}(C)$.
 There exists a small $\rho >0$ such that,
 for each fixed   $|t| < \rho$, the flow $\phi_{t}(x)$ defines a diffeomorphism $\Phi_{t}$ between
an open neighborhood $A$ of $C$ in $L$ and an open neighborhood $A_{t}$ of $C_t$ in $L_{t}$, which restricts to a diffeomorphism from $C$ to $C_t$. Moreover, if $t>0$,  $\Phi_t$ preserves
 the orientation  induced by the gradient $\nabla f$ on the fibers of $f$.

 If $|t| < \rho$, the map $\Psi_t = \Psi \circ \Phi_{t}^{-1}$ takes $A_{t}$ diffeomorphically
  into   a neighborhood of $\SSS^{1}$ in $\R^2$, sending $C_t$ to $\SSS^1$.
We define $ \wt{X}_{t}  = \Psi_{t\,\ast}(X|_{A_{t}})$.
 The map $s\mapsto  \wt{X}_{st}$, for $s\in[0,1]$, defines a homotopy between $\wt{X}_L=\wt{X}_0$ and $\wt{X}_{t}$. Moreover, if $t$ is sufficiently small, we may assume that $\wt{X}_{st}$ never vanishes over $\SSS^{1}$.
Thus we have
$$
I_L(X)=\mbox{degree }(\wt{X}_L/||\wt{X}_L|| :\SSS^{1} \to \SSS^{1})=\mbox{degree }(\wt{X}_t/||\wt{X}_t|| :\SSS^{1} \to \SSS^{1}).
$$
 By our choice of the local component $L$ and Proposition~\ref{pro:simply-connected fibers}, if $t>0$ is sufficiently small, $C_t$ is contained in a connected component of $L_t$ that is simply connected. Hence, the curve $C_t$ is the boundary of a submanifold $D_t$ in $L_t$ diffeomorphic to the unit disc $\D$ via a diffeomorphism $h:D_t\to\D$, which can be extended to a neighborhood of $C_t$ in $L_t$ and  satisfies $h(C_t)=\SSS^1$. On the one hand, the vector field $\xi=h_\ast(X|_{D_t})$ in $\D$ has Poincar\'{e} index equal to $0$, since it never vanishes.
On the other hand, such an index can be calculated as the degree of the map $\xi/||\xi||:\SSS^1\to\SSS^1$ and this is equal to the degree of $\wt{X}_t/||\wt{X}_t|| :\SSS^{1} \to \SSS^{1}$, since $\xi$ and $\wt{X}_t$ are related by the diffeomorphism $h\circ\Psi_t^{-1}$ defined in a neighborhood of $\SSS^1$ in $\R^2$. This concludes the proof of the Proposition.
\end{proof}

\section{Proof the main theorem}

\label{section-proof}

In this section we complete  the proof of  the first part of
Theorem~\ref{th:main}.

Let $X$ be a germ of   analytic vector field with an isolated singularity at $0\in\R^3$ having a non-constant analytic first integral $f:\R^3\to\R$ with $f(0)=0$. As mentioned in the introduction, we may assume that the singular locus $\Sing(df)=\{p\in\R^3\,:\,df(p)=0\}$ of $f$ has no components of real dimension equal to one at $0$. This is a consequence of the following result
(whose proof given in \cite{Mol} for the complex case generalizes, without changes, to the real case)
 and the fact that $X$ is tangent to the foliation given by $df=0$.

\begin{proposition}\label{pro:singular-set-invariant}
Let $Y$ be a germ of   real analytic vector field having an isolated singularity at $0\in\R^3$. Let $\omega$ be a germ of   real analytic integrable $1$-form at $0\in\R^3$ such that $\omega(Y)=0$. Then the one-dimensional components of $\Sing(\omega)=\{p\,:\,\omega(p)=0\}$ are invariant by $Y$.
\end{proposition}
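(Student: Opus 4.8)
The plan is to reduce the statement to the following local claim and then prove it by a Lie–derivative computation together with a flow–box argument: \emph{for every one–dimensional component $\Gamma$ of $\Sing(\omega)$ and every point $p\in\Gamma$, $p\neq 0$, which is a smooth point of $\Gamma$ lying on no other component of $\Sing(\omega)$, one has $Y(p)\in T_p\Gamma$.} Such points $p$ form a dense open subset of $\Gamma$, the excluded set (non-smooth points of $\Gamma$, intersections with the finitely many other components of $\Sing(\omega)$, and the origin) being a proper analytic subset of the irreducible curve $\Gamma$, hence finite. Granting the claim, if $\gamma(t)$ is a primitive parametrization of $\Gamma$ then $Y(\gamma(t))$ and $\gamma'(t)$ are collinear for a dense set of $t$, hence for all $t$ by analyticity; writing $Y(\gamma(t))=h(t)\gamma'(t)$ and using $Y(0)=0$ to absorb the possible zero of $\gamma'$ at $0$, one gets $h\in\R\{t\}$, which is exactly the invariance of $\Gamma$ by $Y$. (Note $Y(p)\neq 0$ since $Y$ has an isolated singularity, although this is not needed for the claim.)

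To prove the claim, fix $\Gamma$ and $p$ and work in a small ball $U$ around $p$ on which $\Sing(\omega)\cap U=\Gamma\cap U$ is a smooth curve, i.e.\ an analytic set of codimension two. The crucial identity is $\mathcal{L}_Y\omega=g\,\omega$ for some analytic function $g$ on $U$. Since $i_Y\omega=0$, Cartan's formula gives $\mathcal{L}_Y\omega=i_Y(d\omega)$, and contracting the integrability relation $\omega\wedge d\omega=0$ with $Y$ yields
\[
0=i_Y(\omega\wedge d\omega)=(i_Y\omega)\,d\omega-\omega\wedge i_Y(d\omega)=-\,\omega\wedge\mathcal{L}_Y\omega,
\]
so $\omega\wedge\mathcal{L}_Y\omega=0$. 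On $U\setminus\Gamma$, where $\omega$ is a nowhere–zero $1$-form, this forces $\mathcal{L}_Y\omega=g\,\omega$ with $g$ the well-defined analytic common ratio of the corresponding coefficients of $\mathcal{L}_Y\omega$ and $\omega$; since $\Gamma\cap U$ has codimension two, $g$ extends analytically across it and the identity $\mathcal{L}_Y\omega=g\,\omega$ holds throughout $U$.

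Next I would integrate along the local flow $\{\phi_t\}$ of $Y$ on $U$. From $\tfrac{d}{dt}\phi_t^{*}\omega=\phi_t^{*}(\mathcal{L}_Y\omega)=(g\circ\phi_t)\,\phi_t^{*}\omega$ and $\phi_0^{*}\omega=\omega$, solving this pointwise linear ODE gives $\phi_t^{*}\omega=a_t\,\omega$ with $a_t=\exp\!\big(\int_0^{t}g\circ\phi_s\,ds\big)>0$. Hence $\Sing(\phi_t^{*}\omega)=\Sing(\omega)$, i.e.\ $\phi_t^{-1}(\Sing(\omega))=\Sing(\omega)$ near $p$: the local flow of $Y$ preserves $\Gamma$. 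Therefore $\phi_t(p)\in\Gamma$ for all small $t$, and differentiating at $t=0$ gives $Y(p)=\tfrac{d}{dt}\big|_{t=0}\phi_t(p)\in T_p\Gamma$, which proves the claim.

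The one genuinely technical point is the analytic extension of $g$ across $\Sing(\omega)$ in the paragraph above: it uses that, near a generic point of $\Gamma$, the singular set of $\omega$ is an analytic set of codimension at least two \emph{in the complex-analytic sense}, so that the Hartogs-type extension theorem applies. This is automatic once the coefficients of $\omega$ have no common factor; in the presence of such a factor one first replaces $\omega$ by its reduced $1$-form (which changes neither $i_Y\omega=0$ nor integrability, and whose singular set contains every one-dimensional component of $\Sing(\omega)$), and in the application where $\omega=df$ one likewise works with the reduction of $f$ — this reduction step, together with the fact that $Y(0)=0$ for the final ``tangent $\Rightarrow$ invariant'' step, is where the hypothesis that $Y$ has an isolated singularity intervenes. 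Everything else — the linear ODE along the flow and the analytic continuation from tangency at a generic point to genuine invariance — is routine, so I expect the extension/reduction step to be the only place demanding care.
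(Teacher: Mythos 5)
The paper does not actually spell out a proof of this proposition: it only remarks that the proof of the complex statement in \cite{Mol} carries over to the real case, so your argument should be judged as a self-contained proof along the standard lines. Its core is correct: from $i_Y\omega=0$, Cartan's formula and $\omega\wedge d\omega=0$ you correctly obtain $\omega\wedge\mathcal{L}_Y\omega=0$, hence $\mathcal{L}_Y\omega=g\,\omega$ off $\Sing(\omega)$; once $g$ is known to extend analytically across $\Sing(\omega)$, the linear ODE along the flow shows that the local flow of $Y$ preserves $\Sing(\omega)$, giving tangency of $Y$ to a one-dimensional component at its generic smooth points, and the passage from generic tangency to invariance (including the order count at $t=0$ using $Y(0)=0$ to get $h\in\R\{t\}$) is sound.

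The genuine gap is in your handling of the extension step when the coefficients of $\omega$ share a common factor. Your parenthetical claim that the reduced form $\omega'$ (where $\omega=h\,\omega'$) has singular set containing every one-dimensional component of $\Sing(\omega)$ is false: for $\omega=(x^2+y^2)\,dz$ the reduced form is $dz$, whose singular set is empty, while $\Sing(\omega)$ is the $z$-axis. Worse, in this degenerate situation the statement itself fails as literally written: take $Y=x\,\partial_x+(y^2+z^2)\,\partial_y$; then $Y$ is real analytic with isolated singularity at $0$, $\omega$ is integrable, $\omega(Y)=0$, yet the $z$-axis (the unique one-dimensional component of $\Sing(\omega)$) is not invariant, since $Y(0,0,z)=(0,z^2,0)$. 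So no reduction trick can rescue that case; the proposition must be read, as in the complex setting of \cite{Mol} that the paper invokes, under the standing convention that the coefficients of $\omega$ have no common factor, equivalently that the complexified singular set has codimension at least two (note that a one-dimensional \emph{real} zero set such as $\{x^2+y^2=0\}$ does not guarantee this, which is exactly the pitfall you flagged). Under that convention your proof is complete: real coprimality of the three coefficients forces coprimality of their complexifications (a common complex irreducible factor $q$ would produce the real common factor $q\bar q$, or a real associate of $q$), so after complexifying the identity $\omega\wedge\mathcal{L}_Y\omega=0$ the ratio $g$ is holomorphic off an analytic set of codimension $\geq 2$ and extends by the Riemann--Hartogs extension theorem, its restriction to $\R^3$ being the real analytic factor you need. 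Finally, the isolated-singularity hypothesis does not make the reduction work, contrary to your closing remark; in your argument it only ensures $Y(0)=0$ in the last step, its real role being elsewhere in the paper.
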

 Under the hypothesis that $f$ is a first integral of $X$, we obtain  that the special fiber $Z=f^{-1}(0)$ of $f$ is not reduced to a single point, i.e., that
$
Z\setminus\{0\}\neq\emptyset.
$
This is a consequence of Brunella's result \cite{Bru} which asserts that $X$ has a non trivial orbit $\tau$ accumulating to the origin. In our case, we have necessarily that $\tau\subset Z$  and hence $Z\neq\emptyset$, since the orbits of $X$ are contained in the fibers of $f$.

Moreover, the following lemma implies that we may assume that the function $f$ satisfies the (RIS) hypothesis.

\begin{lemma}
If $f$ does not satisfy the (RIS) hypothesis then $X$ has a real analytic separatrix.
\end{lemma}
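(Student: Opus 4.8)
The plan is to extract the separatrix from the singular locus of the \emph{saturated} $1$-form of the foliation tangent to $X$, and then invoke Proposition~\ref{pro:singular-set-invariant}. Recall that at this point we already know, by Brunella's theorem, that $Z\setminus\{0\}\neq\emptyset$; hence the first clause of (RIS) holds, so the failure of (RIS) means precisely that $\Sing(dh)\not\subseteq\{0\}$, where $h=Red(f)=f_1\cdots f_r$.

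First I would introduce $\omega$: write $df=g\,\omega$, where $g$ is the greatest common divisor (in $\R\{x,y,z\}$) of the coefficients of $df$, so that $\omega$ is a germ of analytic integrable $1$-form whose coefficients are coprime, and $\omega(X)=g^{-1}df(X)=0$. The reason for passing from $df$ (or $dh$) to $\omega$ is that, having coprime coefficients, $\omega$ satisfies $\dim\Sing(\omega)\leq 1$: a two-dimensional component $\{p=0\}$ of $\Sing(\omega)=\{\omega=0\}$ would force the irreducible $p$ to divide all three coefficients of $\omega$, which is absurd. (A curve contained in $\Sing(df)$ need not be an irreducible \emph{component} of $\Sing(df)$, which may well have two-dimensional pieces; this is exactly what prevents applying Proposition~\ref{pro:singular-set-invariant} to $\Sing(df)$ directly.)

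Next I would verify the inclusion $\Sing(dh)\subseteq\Sing(\omega)$. Each irreducible component $W$ of $\Sing(dh)$ contains $0$ and carries $dh\equiv 0$; joining an arbitrary point of $W$ to $0$ by an analytic arc in $W$ gives $h|_W\equiv h(0)=0$, so $W\subseteq Z$ and thus $\Sing(dh)\subseteq Z$. Now take $p\in\Sing(dh)$ and set $S=\{i:f_i(p)=0\}$, which is nonempty since $p\in Z$. Writing $\omega=\sum_i n_i\bigl(\prod_{j\neq i}f_j\bigr)df_i$ and $dh=\sum_i\bigl(\prod_{j\neq i}f_j\bigr)df_i$ and evaluating at $p$, a short case distinction — $|S|\geq 2$, in which case every summand vanishes; $|S|=\{i_0\}$, in which case $dh(p)=0$ forces $df_{i_0}(p)=0$ — shows $\omega(p)=0$. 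Hence $\Sing(dh)\subseteq\Sing(\omega)$.

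Finally, combining $\{0\}\subsetneq\Sing(dh)\subseteq\Sing(\omega)$ with $\dim\Sing(\omega)\leq 1$, the germ $\Sing(\omega)$ has a one-dimensional irreducible component $\Gamma$, which is then a germ of irreducible analytic curve at $0\in\R^3$. Applying Proposition~\ref{pro:singular-set-invariant} to $X$ and $\omega$ (note $X$ has an isolated singularity and $\omega(X)=0$), the curve $\Gamma$ is invariant by $X$; that is, $\Gamma$ is a real analytic separatrix of $X$, which proves the Lemma. The only genuinely delicate points are the passage to the saturated form $\omega$ together with the two facts $\dim\Sing(\omega)\leq 1$ and $\Sing(dh)\subseteq\Sing(\omega)$, and the real-analytic subtlety that $dh$ vanishing on a connected analytic germ makes $h$ locally constant there (in particular that $\Sing(dh)$ need not equal $\Sing(Z)$ over $\R$); the remainder is routine.
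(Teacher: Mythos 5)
Your overall strategy is the paper's: the form you call $\omega$ is (up to the point discussed below) the paper's $\omega_f=Red(f)\,\frac{df}{f}=\sum_{j} n_j f_1\cdots\widehat{f_j}\cdots f_r\,df_j$, your case distinction on $S=\{i:f_i(p)=0\}$ reproduces exactly the paper's description (i)--(ii) of $\Sing(dh)\cap Z$, and both proofs conclude by applying Proposition~\ref{pro:singular-set-invariant} to a one-dimensional component. The one step you present as a mere rewriting, however, carries real content and is left unproved: the identity $\omega=\sum_i n_i\bigl(\prod_{j\neq i}f_j\bigr)df_i$, i.e.\ the claim that the gcd of the coefficients of $df$ is exactly $\prod_j f_j^{n_j-1}$ (equivalently, that the displayed form has coprime coefficients). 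Excluding a common irreducible factor associate to some $f_k$ is easy (it would force $f_k\mid \partial_\ell f_k$ for every $\ell$, impossible by order reasons), but excluding an extra irreducible factor $p$ not associate to any $f_j$ is not automatic over $\R$: the real zero set of $p$ may be too small to carry the geometric argument one would use over $\C$ (think of $p=x^2+y^2+z^2$), so one has to complexify and match the real and complex prime factorizations of $f$, or argue in some equivalent way. Without this identification your computation only yields $\Sing(dh)\subseteq\Sing(\omega_f)$, and since a priori $\omega_f=u\,\omega$ with $u$ a possibly non-unit common factor, this does not place the points of $\Sing(dh)\setminus\{0\}$ in $\Sing(\omega)$, which is precisely what your dimension count requires.

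The claim is true and the gap is fillable, so the argument can be completed; but note how the paper sidesteps it: it works directly with $\omega_f$ (for which $\omega_f(X)=0$ is immediate) and only controls $\Sing(\omega_f)\cap Z$ via (i)--(ii), using $\Sing(dh)\subseteq Z$, so coprimality of the coefficients is never needed. If you keep your saturated-form bookkeeping, add the complexification argument for the identification, and also a word on why a two-dimensional irreducible component of $\Sing(\omega)$ forces a common factor: its ideal is a real prime of height one in the UFD $\R\{x,y,z\}$, hence principal — a standard but genuinely real-analytic fact that your bound $\dim\Sing(\omega)\leq 1$ uses implicitly (the paper needs the analogous facts for its sets (i) and (ii)).
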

\begin{proof}
Consider the prime decomposition $f=f_1^{n_1}f_2^{n_2}\cdots f_r^{n_r}$ where the $f_j$ are two by two different irreducible germs of real analytic functions and let $h=Red(f)=f_1 f_2\cdots f_r$. Notice that $Z=f^{-1}(0)=h^{-1}(0)$. We have already shown that $Z\not\subset\{0\}$, so if $f$ does not satisfy the (RIS) hypothesis then $\Sing(dh)\not\subset\{0\}$. In this case, there exists a component $H$ of the analytic set $\Sing(dh)$ of positive (real) dimension accumulating to the origin. Necessarily $H\subset\Sing(dh)\cap Z$ and hence $H$ is one-dimensional, since $\Sing(dh)\cap Z$ has no component of codimension one. Indeed, in a neighborhood of the origin, a point $p$ belongs to $\Sing(dh)\cap Z$ if and only it satisfies at least one of the following conditions:
\begin{enumerate}[(i)]
\item
there is a pair of indices $i,j$, with $i \neq j$, such that $p \in \{f_{i} = f_{j} = 0\}$;
\item  there
 is an index $j$ such that $p \in \{f_{j} = 0\} \cap \Sing(d f_{j})$.
 \end{enumerate}
 It suffices to prove that $H$ is invariant by $X$. For this, we consider the real analytic $1$-form obtained by canceling the poles of the  logarithmic derivative of $f$:
$$ \omega_{f} = Red(f) \frac{d f}{f} = \sum_{j=1}^{r} n_{j} f_{1} \cdots \widehat{f_{j}} \cdots f_{r} df_{j}.
$$
We have that $\Sing(\omega_f)\cap Z=\Sing(dh)\cap Z$ since both sets are described by the same properties (i) and (ii) above, and thus $H$ is a one-dimensional component of $\Sing(\omega_f)\cap Z$. Finally, since $\omega_f(X)=0$, the set $\Sing(\omega_f)\cap Z$ (and hence $H$) is invariant by $X$ by Proposition~\ref{pro:singular-set-invariant}, as wanted.
\end{proof}

Assume now that the first integral $f$ satisfies the hypothesis (RIS), so that we can apply to $f$ the constructions and the results described in the preceding sections. In particular, let $L_1,...,L_r\subset \overline{B}(0,\eps)$ be the  local  components of
$Z=f^{-1}(0)$, where $\eps$ is sufficiently small, as defined in Section~\ref{sec:fibers} and let $I_{L_j}(X)$ be the index of $X$ along $L_j$ as defined in Section~\ref{sec:index}.

Theorem 1 is a consequence of Proposition~\ref{prop-index} and of  the following result:

\begin{proposition}
\label{pro:index-separatrix}
For any $j\in\{1,2,...,r\}$, either there is a formal real
separatrix of $X$ contained in $L_j$ or $I_{L_j}(X)\neq 0$.
\end{proposition}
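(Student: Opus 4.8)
I would prove the contrapositive: assuming $I_{L_j}(X)=0$, I will produce a formal real separatrix of $X$ contained in $L_j$. The strategy is to reduce to a two-dimensional problem on the surface $L_j$, where the index is the classical Poincar\'e index, and then invoke (a formal version of) the planar fact that a planar analytic vector field with zero Poincar\'e index has a formal separatrix. To make this rigorous on the singular surface $\overline{L_j}$, I would again use the reduction of singularities of Proposition~\ref{cor:real-reduction}: blowing up, the strict transform $L'_j$ is a smooth cylinder $[0,1]\times\SSS^1$ whose boundary component $C_\infty\subset E'_m$ is a piecewise-smooth analytic circle, and the lifted vector field $X'=\sigma^\ast X$ is an analytic vector field on a neighborhood of $L'_j$ leaving $L'_j$ invariant. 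A separatrix of $X$ inside $L_j$ corresponds, via $\sigma$, to either a point of $C_\infty$ which is a singular point of $X'|_{L'_j}$ admitting an invariant formal curve through it transverse to $C_\infty$, or to the divisor $C_\infty$ itself being invariant (a "dicritical" situation, which here cannot produce a genuine separatrix unless handled separately). So the first step is: \emph{transport the index computation to the boundary circle $C_\infty$ of the reduced model.}

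Second, I would analyze $X'|_{L'_j}$ near $C_\infty$. By homotopy invariance of the index (the same argument as in the proof of Proposition~\ref{prop-index}, pushing along a collar of $C_\infty$ inside $L'_j$), the index $I_{L_j}(X)$ equals the index of $X'|_{L'_j}$ computed as the degree of $X'|_{L'_j}/\|X'|_{L'_j}\|$ along $C_\infty$. Now $C_\infty$ is a finite union of smooth analytic arcs meeting at corner points (intersections of components of $E'_m$). At each such corner the local picture is that of a reduced singularity of an analytic function of monomial type, so $X'$ is, after dividing by a monomial, a vector field whose linear part I can control. The key claim to establish is a \emph{Bendixson-type formula}: the index of $X'|_{L'_j}$ along $C_\infty$ can be written as $1 + \tfrac12(e-h)$, where $e$ and $h$ count elliptic and hyperbolic "sectors" of $X'|_{L'_j}$ distributed among the singular points of $X'|_{L'_j}$ lying on $C_\infty$ (including the corners). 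If \emph{no} such singular point carries a sectorial behavior forcing an invariant curve — i.e. if $X'|_{L'_j}$ were everywhere transverse to $C_\infty$ or purely rotational — one checks by a direct count (analogous to Cases $e=1,2,3$ in the proof of Proposition~\ref{pro:simply-connected fibers}, where the local models are linear vector fields with real eigenvalues or transverse fields) that the index is forced to be nonzero, typically $+1$. Contrapositively, $I_{L_j}(X)=0$ forces the existence of a singular point $p\in C_\infty$ of $X'|_{L'_j}$ at which there is a local invariant analytic (hence formal) curve $\Gamma'\not\subset C_\infty$; blowing down, $\sigma(\Gamma')$ is the desired formal real separatrix of $X$ inside $L_j$.

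Third, I would fill the one genuine gap in the above: at a singular point $p\in C_\infty$, why does $X'|_{L'_j}$ have an invariant formal curve transverse to the divisor? Here I invoke the planar theory. Near $p$, the surface $L'_j$ is a smooth analytic surface (a half-plane or a quadrant, since $L'_j$ has boundary and corners), and $X'|_{L'_j}$ is a germ of real analytic vector field at $p$ with $C_\infty$ as (part of) an invariant analytic curve through it. The Camacho--Sad/Seidenberg reduction of singularities for planar vector fields, applied to $X'|_{L'_j}$, produces after finitely many blow-ups only elementary singularities; because $C_\infty$ is already invariant, one tracks the self-intersection/index data along the exceptional chain and shows that some infinitely near point must carry a separatrix transverse to the exceptional divisor, unless the index contribution at $p$ is the "rotational" value $+1$ — which is exactly excluded once we sum to $I_{L_j}(X)=0$ over all singular points on $C_\infty$. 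The main obstacle, and the step requiring the most care, is precisely this bookkeeping: making the Bendixson sector decomposition rigorous on the \emph{piecewise-smooth} circle $C_\infty$ with corners, and checking that the corner points (where $E'_m$ is a normal crossing of two components) do not contribute spurious index that would allow $I_{L_j}(X)=0$ with no separatrix. I expect this to be handled by the explicit monomial normal forms already available from Proposition~\ref{cor:real-reduction}, reducing each corner to a linear model as in the $e=2$ case above, where the eigenvalue signs are pinned down by positive-definiteness of an auxiliary metric or by the sign constraints coming from $f\circ\sigma$ being of monomial type.
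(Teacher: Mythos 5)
Your overall strategy coincides with the paper's: lift $X$ to the reduced surface $L'_j$ of Proposition~\ref{cor:real-reduction}, identify $I_{L_j}(X)$ with a degree computed along a curve near the boundary divisor, and conclude via a Poincar\'e/Bendixson-type tangency count. However, the heart of the proof is missing. You assert, but never establish, the dichotomy ``no invariant formal curve transverse to the divisor $\Rightarrow$ index nonzero'', and you yourself flag the ``bookkeeping'' as the main obstacle. The paper fills exactly this gap with a concrete structural argument: extend $X'=\sigma^{\ast}(X|_L)$ to an oriented singular foliation $\mathcal{F}'$ on $L'$, apply Seidenberg's reduction at points of $D'$ so that all singularities are simple and each component of $D'\setminus J$ is invariant or everywhere transverse; then the assumption ``no formal separatrix in $L$'' forces every component of $D'\setminus J$ to be invariant (a leaf crossing a transverse component would blow down to an analytic separatrix in $L$ --- note this contradicts your parenthetical that the transverse/``dicritical'' configuration cannot produce a genuine separatrix; it produces them immediately), and forces $\Sing(\mathcal{F}')=J$ with the two local branches of $D'$ as the only formal separatrices at each corner, because every simple real singularity with real eigenvalues has exactly two transversal formal separatrices. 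This rigidity yields the sink/source/saddle classification of the corners, the decomposition of $D'$ into node connections, and finally an explicit closed curve $\mathcal{S}$ surrounding $D'$, transverse to $X'$ except for one interior tangency per node connection; Pugh's formula $\deg(\theta)=1+\tfrac{i-e}{2}$ then gives $I_L(X)=1+\tfrac{i}{2}\geq 1$. Without this (or an equivalent) analysis, your claim that the index is ``forced to be nonzero, typically $+1$'' is unsupported.

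Two further points in your sketch are misdirected. First, the Camacho--Sad-style tracking of index data along the exceptional chain is neither needed nor correct as a substitute: the degree along $C_\infty$ does not decompose into local contributions at singular points that are each ``$+1$ or else a transverse separatrix'', and Camacho--Sad index arguments concern complex separatrices, whereas here the existence of two formal separatrices at each elementary real singularity comes for free after Seidenberg reduction --- the only question is whether one of them leaves the divisor, which is precisely what the node-connection/tangency count decides. Second, the control of the corner models ``by positive-definiteness of an auxiliary metric'' is borrowed from the gradient field $\xi=-\nabla_g(\wt{f}^2)$ in the proof of Proposition~\ref{pro:simply-connected fibers}; it is unavailable here, since $X$ is an arbitrary analytic vector field tangent to the fibers of $f$, not a gradient, and the monomiality of $f\circ\sigma$ constrains the fibers but not the linear part of $X'$ at the corners. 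The correct tool at that stage is the reduction of singularities of the induced foliation on the surface $L'_j$ itself, which is what the paper invokes.
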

\begin{proof}
Fix $j\in\{1,2,...,r\}$ and put for simplicity $L=L_j$, $C=C_j$ etc. Assume that $\eps$ is sufficiently small so that Proposition~\ref{cor:real-reduction} holds for $V=\overline{B}(0,\eps)$. That is, there exists a sequence of real blow-ups $\sigma:M'\to V$ such that $L'=\overline{\sigma^{-1}(L)}$ is a real analytic surface with boundary and corners, homeomorphic to a closed cylinder $[0,1]\times\SSS^1$, such that $\sigma$ induces a diffeomorphism between $\sigma^{-1}(L)$ and $L$. The boundary of $L$ consists of the two components $C'=\sigma^{-1}(C)$ (the transform of the link of $L$ by $\sigma$) and $D'=L'\cap E'$, where $E'=\sigma^{-1}(0)$ is the exceptional divisor of $\sigma$. While $C'$ is a smooth analytic curve, $D'$ is only piecewise smooth analytic. Denote by $J\subset D'$ the set of  corners of $D'$, i.e., the set of  points where $D'$ is not smooth. Consider in $L'$ the orientation induced from that of $L$ by $\sigma$.
Up to considering  another surface diffeomorphic to $L'$, we may assume that $L'$ is a submanifold with boundary and corners inside the euclidean plane  $\R^2$, with the standard orientation.

The transformed vector field $X'=\sigma^\ast (X|_L)$ in $L'\setminus D'$ defines a one-dimensional singular analytic foliation $\mathcal{F}'$ which can be extended analytically to $D'$ as an oriented foliation (i.e., at any point $p\in D'$, there is an analytic vector field  $X'_p$  in a neighborhood $V_p$ of $p$ in $L'$, with isolated singularities, generating $\mathcal{F}'$ and such that  $X'_p$  and $X'$ are equally oriented in $V_p\setminus D'$) whose set of singular points $\Sing(\mathcal{F}')$ is finite and contained in $D'$. Moreover, using Seidenberg's Theorem on reduction of singularities \cite{Sei}, and up to considering new blow-ups on $L'$ at points of $D'$, we can assume that any point of $\Sing(\mathcal{F}')$ is a simple singularity (that is, the eigenvalues $\lambda,\mu$ of the  corresponding  linear part are real and satisfy $\mu\neq 0$ and $\lambda/\mu\not\in\Q_{>0}$) and that any connected component of $D'\setminus J$ is either invariant for or everywhere transversal to $\mathcal{F}'$.

Suppose that there is no formal real separatrix of $X$ inside $L$. Then, at any point $p\in D'$, the formal separatrices of $\mathcal{F}'$ at $p$ (of a generator $X'_p$ of $\mathcal{F}'$) are contained in $D'$. In particular, any connected component of $D'\setminus J$ is invariant for $\mathcal{F}'$. Also taking into account that a simple singularity of a two-dimensional real vector field, with real eigenvalues, has exactly two transversal formal separatrices (both real, non-singular and  tangent to the corresponding eigendirections), we have necessarily that  $\Sing(\mathcal{F}')=J$ and that the only  formal  separatrices of $\mathcal{F}'$ at any $p\in J$ are the two components of $D'$ through the point  $p$  (thus, they are analytic separatrices). Notice that, since $D'$ is contained in the boundary of $L'$, there are exactly two connected components of $D'\setminus J$ locally at $p\in J$, each of them is part of one of the separatrices of  $\mathcal{F}'$  at $p$.
Each connected component $\ell$ of $D'\setminus J$ is a non-singular oriented leaf of $\mathcal{F}'$ , going from $\alpha(\ell)$ to $\omega(\ell)$, both points in $J$.  A singular point $p\in J$ is either a {\em sink}, a {\em source} or a {\em saddle}, depending if $\omega(\ell)=\omega(\ell')=p$, $\alpha(\ell)=\alpha(\ell')=p$ or the remaining cases, respectively, where $\ell,\ell'$ are the two components of $D'\setminus J$ which accumulate to $p$. Sinks an sources are jointly called {\em nodes}. A {\em node connection} is a union $$\tau=\overline{\ell_1}\cup\cdots\cup\overline{\ell_r}$$ where the $\ell_j$ are connected components of $D'\setminus J$ satisfying $\alpha(\tau):=\alpha(\ell_1)$ is a source, $\omega(\tau):=\omega(\ell_r)$ is a sink and $\omega(\ell_j)=\alpha(\ell_{j+1})$  for $j=1,...,r-1$ (which are saddle points). By construction, there is a continuum of trajectories of $X'$ in $L'\setminus D'$ accumulating to $\tau$ and having $\alpha(\tau)$, $\omega(\tau)$ as the $\alpha$ and $\omega$-limit set, respectively. See the figure below:


\medskip
\begin{center}
\begin{overpic}[scale=1]
{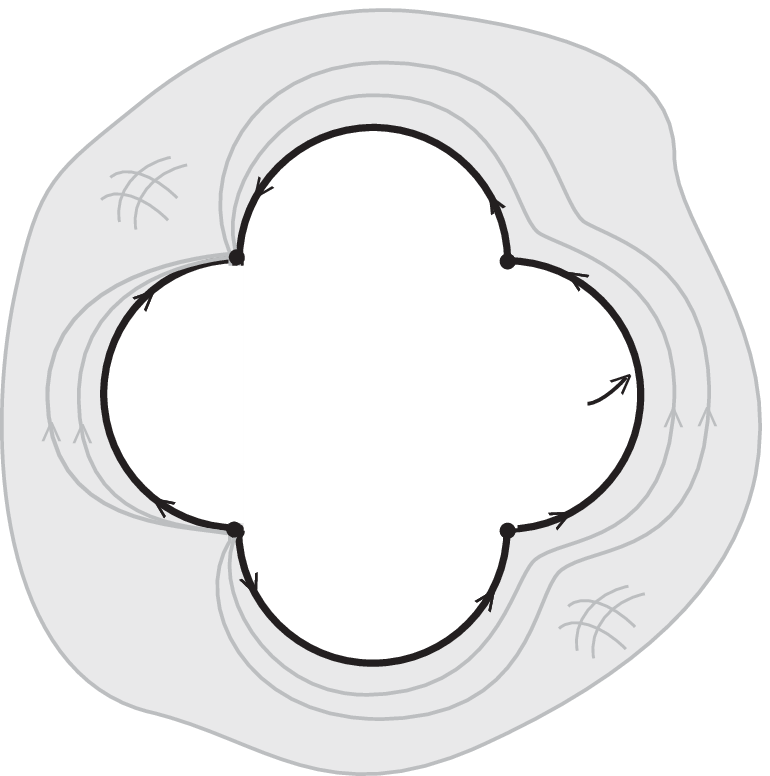}
\put (60,136) {Sink}
\put (130,136) {Saddle}
\put (130,79) {Saddle}
\put (55,79) {Source}
\put (86,105) {Node connection $\tau$}
\put (30,50) {$L'$}
\put (105,37) {$D'$}
\end{overpic}
\end{center}
\medskip

By  the nature of real blow-ups, the cardinal of $J$ is even and, if $J\neq\emptyset$, the number of connected components of $D'\setminus J$ and the number of node connections are also even.

To conclude the proposition, let us prove that  in this situation we have   $I_L(X)>0$.
The index $I_L(X)$ can be calculated as follows. Let $\mathcal{S}$ be a closed simple curve in $L'\setminus D'$ surrounding $D'$ and homotopic to $C'$ in $L'\setminus D'$
with the standard orientation and let $\phi:\SSS^1\to\mathcal{S}$ be an orientation preserving homeomorphism. Then $I_L(X)$ is equal to the degree of the map
$$
\theta:\SSS^1\to\SSS^1,\;p\mapsto\frac{X'(\phi(p))}{\|X'(\phi(p))\|}.
$$
Suppose, moreover, that $\mathcal{S}$ is a differentiable curve having only finitely many tangencies with $X'$ and let $i$ and $e$ be, respectively, the number of {\em interior} and {\em exterior} tangencies (i.e.,  at such a  tangency  point $q$, the orbit of $X$, devoid of $q$, stays locally at $q$ in the interior or  in  the exterior of $\mathcal{S}$, respectively). Then,  from Poincar\'{e} (see also Pugh's work \cite{Pug}),  we can calculate the degree of the map $\theta$ above as
\begin{equation}\label{eq:tangencies}
\mbox{deg}(\theta)=1+\frac{i-e}{2}.
\end{equation}
We will finish  by constructing  a differentiable curve  $\mathcal{S}$    with a positive (even) number of interior tangencies  and  no exterior tangencies with the vector field $X'$.

Let $\tau$ be a node connection in $D'$ and let $\g_1,\g_2$ be two trajectories of $X'$ in $L'\setminus D'$, both having $\alpha$ and $\omega$-limit equal to $\alpha(\tau)$ and $\omega(\tau)$, respectively, and such that $\g_2$ is inside the circle $\tau\cup\overline{\g_1}$. Using the flow-box theorem, we can construct a differentiable arc of curve $\mathcal{S}_\tau$ connecting two different points  of $\g_1$, lying inside $\tau\cup\overline{\g_1}$ except for its extremities and   everywhere transversal to $X'$ except for a point $a_\tau$ where it touches $\g_2$. This is depicted   in the    figure:


\medskip
\begin{center}
\begin{overpic}[scale=1]
{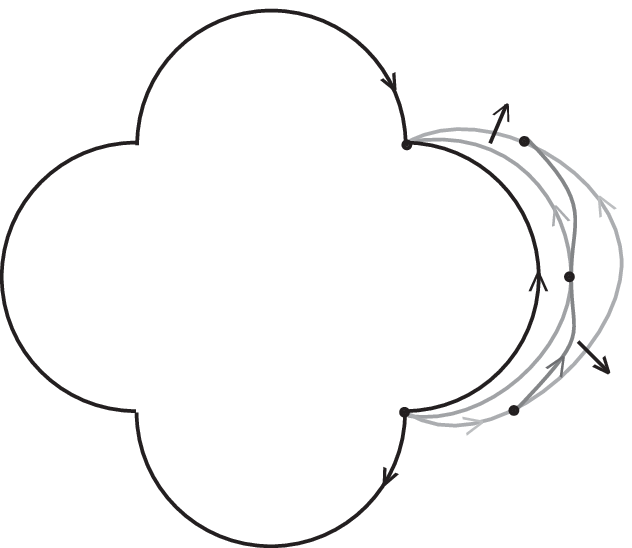}
\put (178,100) {$\gamma_{1}$}
\put (142,133) {$\gamma_{2}$}
\put (168,77) {$a_{\tau}$}
\put (177,46) {$\mathcal{S}_\tau$}
\put (146,77) {$\tau$}
\end{overpic}
\end{center}
\medskip

  We  choose the extremities of $\mathcal{S}_\tau$ sufficiently near the corresponding singular points $\alpha(\tau)$, $\omega(\tau)$, so that, in sufficiently small neighborhoods of the
 the node singularities of $D'$,
  the arcs $\mathcal{S}_\tau$ can be jointed in a smooth way by
 small arcs   transversal to $X'$. Thus, we produce   a simple closed curve $\mathcal{S}$ surrounding $D'$ with the required properties: $\mathcal{S}$ is everywhere transversal to $X'$ except for the points $a_\tau$, which are in fact interior tangencies of $\mathcal{S}$ with the vector field $X'$, and there are as  many  of them as the number of node connections (an even number).
\end{proof}

\begin{rk}{\em
It is worth  noticing  that   formula (\ref{eq:tangencies}) is closely related to Bendixson's formula for the index of a planar analytic vector field $X$ at the origin in $\R^2$:
$$
I(X)=1+\frac{e-h}{2}
$$
where $e$ is the number of ``elliptic'' sectors and $h$ is the number of ``hyperbolic'' sectors of $X'$ at the origin (see Andronov et al. \cite{And}). In fact, in our situation, if we collapse $L'$ into a neighborhood of $0\in\R^2$ sending $D'$ to the origin, the push-forward of $X'$ gives a vector field (which can be continuously extended to the origin) having as  many  elliptic sectors as the number of node connections in $D'$ and no hyperbolic sectors. This is an alternative proof of the last part of Proposition~\ref{pro:index-separatrix}, after the observation that Bendixson's formula extends to continuous vector fields which have  finitely many sectors of  elliptic, hyperbolic or parabolic   type.
}
\end{rk}

\section{Examples}\label{sec:example}

In this section we prove the second part of Theorem~\ref{th:main}, that is, we provide examples of analytic vector fields at $0\in\R^3$ having an analytic non-constant first integral but  not having  analytic separatrices.
Our examples are obtained as a one-parameter unfolding of a two-dimensional vector field which has a unique formal real separatrix which is not convergent. In the introduction, we have already discussed the existence of planar vector fields with such a property,  for instance, Risler's example in \cite{Ris}. We need to modify such  example in order that its unfolding produces a three-dimensional vector field with isolated singularity.

\begin{proposition}\label{pro:example-Ya}
Let $a=a(x)\in\R\{x\}$ be a convergent series in one variable such that $a(0)=a'(0)=0$ and consider the planar analytic vector field
\begin{equation}\label{eq:example-Ya}
Y_a=(y^2+x^4)\frac{\partial}{\partial x}+(-xy+x^3a(x)+\frac{a(x)}{x}y^2)\frac{\partial}{\partial y}.
\end{equation}
Then $Y_a$ has a unique real formal separatrix $\G_a$ at $0\in\R^2$ and, for a convenient (in fact generic) choice of the series $a(x)$, $\G_a$ is not convergent.
\end{proposition}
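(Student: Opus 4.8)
The plan is to resolve the singularity of $Y_a$ by a single blow-up, to identify the unique relevant singular point of the transformed foliation as a saddle-node whose weak (central) separatrix is the strict transform of the curve $\G_a$ we are looking for, and then to detect divergence of that weak separatrix through its Martinet-Ramis modulus, using Elizarov's computation of the differential of the modulus map to conclude that divergence is the generic behaviour in the family $\{Y_a\}$.

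\emph{Resolution.} Since $a(0)=a'(0)=0$, the lowest-order part of $Y_a$ is $y^2\frac{\partial}{\partial x}-xy\frac{\partial}{\partial y}=y\big(y\frac{\partial}{\partial x}-x\frac{\partial}{\partial y}\big)$, so I would blow up the complexification of $Y_a$ at the origin. In the chart $(x,u)\mapsto(x,xu)$, dividing the pull-back by its exceptional equation $x$ yields
\[
\wt{Y}_a=x(u^2+x^2)\frac{\partial}{\partial x}-\Big(u(1+u^2)+x^2u-\tfrac{a(x)}{x}u^2-xa(x)\Big)\frac{\partial}{\partial u},
\]
whose restriction to the exceptional divisor $E=\{x=0\}$ is $-u(1+u^2)\frac{\partial}{\partial u}\not\equiv 0$; thus the blow-up is non-dicritical and the singular set of the transformed foliation $\FF_a$ on $E$ is $\{u=0\}\cup\{u=\pm i\}$. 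A short computation in the complementary chart shows that the direction of the $y$-axis is a regular point of $\FF_a$, so the only \emph{real} singular point of $\FF_a$ on $E$ is $p_0=(0,0)$; a routine linearisation shows that $(0,\pm i)$ are reduced singularities and that $\wt{Y}_a$ has linear part $\diag(0,-1)$ at $p_0$. Hence one blow-up reduces the singularities of $Y_a$ and $p_0$ is a saddle-node.

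\emph{The saddle-node and uniqueness.} At $p_0$ the nonzero eigenvalue $-1$ is attached to the direction of $E=\{x=0\}$, so $E$ is the (analytic) strong separatrix, while the weak separatrix is a formal curve $\G'_a=\{u=\hat\phi(x)\}$ transverse to $E$, with $\hat\phi(x)=\tfrac12 a''(0)x^3+\cdots$; pushing down, $\G_a=\{y=x\hat\phi(x)\}$ is a formal separatrix of $Y_a$, which proves existence. Along the center manifold $\dot x=x(\hat\phi^2+x^2)\sim x^3$, so $p_0$ is a saddle-node of order $k=2$, independently of $a$. For uniqueness: if $\g$ is any formal separatrix of $Y_a$ at $0$, its strict transform is an invariant formal curve meeting $E$, and at the meeting point two distinct invariant curves ($E$ and that transform) cross, so the point is singular for $\FF_a$; if $\g$ is real the point must be $p_0$, and a saddle-node carries exactly the two formal separatrices $E$ and $\G'_a$. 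Since $\g\neq E$, necessarily $\g=\G_a$; in particular $\G_a$ is real (the saddle-node has real coefficients, so its unique weak separatrix is invariant under conjugation), and $\G_a$ is convergent if and only if $\hat\phi$ is, i.e. if and only if the weak separatrix of the saddle-node $p_0$ converges.

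\emph{Generic divergence, the main obstacle.} By the Martinet-Ramis classification of saddle-nodes \cite{Mar-R,Ily}, the weak separatrix of a saddle-node converges if and only if a suitable component $\iota$ of its analytic modulus is trivial, so $\G_a$ is convergent if and only if $\iota(Y_a)=0$. I would check that the assignment $a\mapsto\iota(Y_a)$, on the space of admissible convergent series $a$ (say, those analytic on a fixed disc, with the sup norm), is analytic and that $\iota(Y_0)=0$, since for $a\equiv 0$ the axis $\{y=0\}$ is invariant for $Y_0$, hence equals $\G_0$ and is convergent. It then suffices to show that $a\mapsto\iota(Y_a)$ is not identically zero, which I would establish by computing its differential at $a=0$ by transporting Elizarov's formula for the tangent map to the Martinet-Ramis modulus \cite{Eli} through the blow-up of the first step, and by exhibiting a direction $a$ along which this differential is nonzero. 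Consequently $\{a:\G_a\text{ is convergent}\}=\{a:\iota(Y_a)=0\}$ is a proper, nowhere dense set, so for generic $a$ the separatrix $\G_a$ diverges. The hard part is exactly this: pinning down the component $\iota$ of the modulus that governs the convergence of the weak separatrix, pulling Elizarov's tangent computation back to the family $\{Y_a\}$ through the blow-up, and verifying the non-degeneracy of the resulting linear functional, all of which requires care with the normalisations of the Martinet-Ramis theory.
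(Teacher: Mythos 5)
Your first half (resolution, identification of the saddle-node after one blow-up with $E$ as strong separatrix, existence and uniqueness of the real formal separatrix) is correct and coincides with the paper's argument. The problem is the second half: the generic divergence of $\G_a$, which is the actual content of the proposition, is only announced, not proved. You yourself flag as ``the hard part'' exactly the steps you do not carry out --- identifying the component of the Martinet--Ramis modulus, computing its differential on the family, and exhibiting a direction where it is nonzero --- and these are precisely what the paper spends the rest of Section~5 on. As it stands this is a plan, not a proof.

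Moreover, the specific plan has a concrete obstruction. Elizarov's formula computes the tangent to the moduli map at the \emph{formal normal form} $v_{p,\lambda}$ (perturbations $v_{p,\lambda}+\eps P\partial_w$), whereas the blown-up vector field at $a=0$ is \emph{not} in normal form: it retains the $a$-independent cubic term (in the paper's coordinates, the term $w^3/(1+w^2)$), so ``the differential of $a\mapsto\iota(Y_a)$ at $a=0$'' is a derivative at a base point where Elizarov's computation does not directly apply. The paper gets around this by restricting to $a(x)=\alpha(2x^2)$, performing a second blow-up and the ramification $z=2x^2$ to land in a codimension-one saddle-node of Elizarov's type ($p=1$, $\lambda=-1$; your single blow-up leaves a saddle-node of order $2$, which would at least force you into the $p=2$ case), and then rescaling $\delta=\eps^{3/2}$, $w=\sqrt{\eps}\,\bar w$ so that at $\eps=0$ the family \emph{is} the model $N$ and the perturbation is linear in $\eps$, with an auxiliary two-parameter family $\zeta_{\eps,\nu,\alpha}$ to show the higher-order terms do not contribute to the first derivative; only then does Elizarov's theorem give the explicit value $u\sum_{k\ge 2}c_k\,k/\Gamma(k+2)\neq 0$ for generic $\alpha$. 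None of this normalization work appears in your outline. A minor additional point: your genericity statement relies on analyticity of $a\mapsto\iota(Y_a)$ on an infinite-dimensional space of convergent series, which the cited references do not give (Il'yashenko's analyticity is for finite-dimensional parameter families); the paper avoids this by working with one-parameter families only.
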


Using this proposition, we construct our desired examples in $\R^3$ as follows.

\begin{example}\label{ex:counter-example}{\em
Given $a(x)\in\R\{x\}$ with $a(0)=a'(0)=0$, consider the vector field in $\R^3$, expressed in coordinates $(x,y,z)$ as
$$
X_a=Y_a+z^2\frac{\partial}{\partial x},
$$
where $Y_a$ is given in (\ref{eq:example-Ya}). The vector field $X_a$ is in fact a family of planar vector fields in the parameter $z$. In other words, the function $f=z$ is an analytic first integral of $X_a$. Moreover, since the coefficient of $\partial/\partial x$ is $y^2+x^4+z^2$, the origin is an isolated singularity of $X_a$. Hence, the real formal separatrices of $X_a$ are those contained in the fiber $z=0$. More specifically, they are the separatrices of the restriction $X_a|_{z=0}=Y_a$. By Proposition~\ref{pro:example-Ya}, $X_a$ has a unique real formal separatrix $\G_a$, which is not convergent for a convenient choice of the series $a(x)$.
}
\end{example}

\strut

\noindent {\em Proof of Proposition~\ref{pro:example-Ya}.} If $\G$ is a formal real separatrix of $Y_a$ then its tangent line corresponds to a root  of the tangent cone of $Y_a$ at the origin, which is given by the equation $y^3+yx^2=y(y^2+x^2)=0$. Thus $\G$ is tangent to $\ell=(y=0)$. Let $\pi_1:M_1\to\R^2$ be the blow-up at the origin and let $p_1$ be the point in the exceptional divisor $E_1=\pi_1^{-1}(0)$ corresponding to $\ell$. The strict transform $\overline{\G}$ of $\G$ by $\pi_1$ is a formal separatrix of the the strict transform $\overline{Y}_a$ of $Y_a$ at $p_1$. A computation using usual coordinates $(x,y_1)=(x,y/x)$ of the blow-up $\pi_1$ shows that $\overline{Y}_a$ has a saddle-node singularity at $p_1$ for which the divisor $E_1$ is the {\em strong} separatrix (tangent to the non-zero eigenvalue) and thus $\overline{\G}$ is the {\em weak} formal separatrix (tangent to the zero eigenvalue). This proves the uniqueness of $\G=\G_a$.

Let us prove that $\G_a$ is not convergent for some choice of the series $a(x)$. For that, we consider the blow-up $\pi_2:M_2\to M_1$ at the point $p_1$ and the point $p_2$ in the exceptional divisor $E_2=\pi_2^{-1}(p_1)$ corresponding to the tangent of $\overline{\G}_a$ at $p_1$. We put usual coordinates at $p_2$ of the form $(x,y_2)=(x,y_1/x)=(x,y/x^2)$ and compute the strict transform of $\overline{Y}_a$ as
$$
\overline{\overline{Y}}_a=x^3(1+y_2^2)\frac{\partial}{\partial x}+
\left(-y_2(1+2x^2(1+y_2^2))+a(x)(1+y_2^2)\right)\frac{\partial}{\partial y_2}.
$$
Again $\overline{\overline{Y}}_a$ has a saddle-node singularity for which the divisor $E_2=(x=0)$ is the strong separatrix and the strict transform  $\overline{\overline{\G}}_a$ of $\overline{\G}_a$ by $\pi_2$ is the weak separatrix.  To finish, let us  show that $\overline{\overline{\G}}_a$ is not convergent for a convenient choice of $a$.
 Let as assume that $a(x)=\alpha(2x^2)$ for some $\alpha(z)\in z\R\{z\}$.  After dividing $\overline{\overline{Y}}_a$ by $1+y_2^2$, we consider the ramification $z=2x^2$ and   rename $w=y_2$,     obtaining the saddle-node vector field
\begin{equation}\label{eq:xia}
\xi_\alpha=z^2\frac{\partial}{\partial z}+\left(-w(1+z)+\frac{w^3}{1+w^2}+\alpha(z) \right)\frac{\partial}{\partial w}.
\end{equation}

It suffices to prove the following:

\begin{assertion}
  There is a choice of the series $\alpha(z)$ so that, for any $\delta>0$  sufficiently small, the weak formal separatrix of the saddle-node vector field $\xi_{\delta\alpha}$ is not convergent.
  \end{assertion}

We use the Martinet-Ramis moduli for analytic orbital classification of holomorphic foliations generated by saddle-node vector fields at the origin of $\C^2$ (see \cite{Mar-R}
and also \cite{Ily}). In our particular case, any vector field $\xi_\alpha$ of the form (\ref{eq:xia}) is formally orbitally equivalent to the   vector field in normal form
$$
N=z^2\frac{\partial}{\partial z}+(-w(1+z))\frac{\partial}{\partial w}.
$$
If we denote by $\mathcal{N}$ the class of vector fields formally orbitally equivalent to $N$, the moduli map associates to any $\eta\in\mathcal{N}$  is  a couple
$
G(\eta)=(g(\eta),\psi(\eta))
$
where $g(\eta)\in\C$ and $\psi(\eta)$ is a germ of a tangent to the identity biholomorphism at $(\C,0)$
in such a way that two vector fields $\eta,\eta'$ are orbitally analytically equivalent if and only if $G(\eta)=G(\eta')$. On the other hand, if $\eta\in\mathcal{N}$ then the weak formal separatrix of $\eta$ is convergent if and only if the constant part  $g(\eta)$  of the moduli is  equal to zero   \cite[Theorem III.4.4]{Mar-R}. Moreover, if we have a family $\{\eta_\lambda\}$ of vector fields in $\mathcal{N}$ depending analytically on $\lambda\in\C^m$ then $\lambda\mapsto g(\eta_\lambda)$ is also analytic \cite[Theorem 1, p. 33]{Ily}.

In order to prove the assertion, put $\delta=\eps^{3/2}$ for $\eps\in\R_{>0}$ and write the vector field $\xi_{\delta\alpha}$ under the change of variable $w=\sqrt{\eps}\bar{w}$ as
$$
\eta_{\eps,\alpha}=z^2\frac{\partial}{\partial z}+\left(-\bar{w}(1+z)+\eps \left( \bar{w}^3 +\alpha(z) \right)-\eps^2\bar{w}^5+\eps^3\bar{w}^7-\cdots \right)\frac{\partial}{\partial \bar{w}}.
$$
Hence $g(\xi_{\eps^{3/2}\alpha})= g(\eta_{\eps,\alpha})$ and it suffices show that there exists a series $\alpha=\alpha(z)$ so that
\begin{equation}\label{eq:derivative-moduli}
\frac{d(g(\eta_{\eps,\alpha}))}{d\eps}\mid_{\eps=0}\neq 0.
\end{equation}
(Notice that this gives the assertion since the weak separatrix of $\xi_{0}=\eta_{0,\alpha}$ is $z=0$ and hence $g(\xi_{0})=0$). First, put
$$
\overline{\eta}_{\eps,\alpha}=N+\eps(\bar{w}^3+\alpha(z))\frac{\partial}{\partial \bar{w}}=
z^2\frac{\partial}{\partial z}+\left(-\bar{w}(1+z)+\eps(\bar{w}^3+\alpha(z))\right)\frac{\partial}{\partial \bar{w}},
$$
so that (changing the notation $w=\bar{w}$) we have $\eta_{\eps,\alpha}=\overline{\eta}_{\eps,\alpha}+\eps Y_\eps$ where
$$
Y_\nu=(-\nu w^5+\nu^2 w^7-\cdots)\frac{\partial}{\partial w}.
$$
In other words, if we put $\zeta_{\eps,\nu,\alpha}=\overline{\eta}_{\eps,\alpha}+\eps Y_\nu$ then we have $\eta_{\eps,\alpha}=\zeta_{\eps,\eps,\alpha}$.  Notice that, for any series $\alpha$, we have that $g(\zeta_{\eps,\nu,\alpha})$ is analytic in $(\eps,\nu)$, $g(\zeta_{0,\nu,\alpha})=0$ for any $\nu$ and  $\zeta_{\eps,0,\alpha}=\overline{\eta}_{\eps,\alpha}$ for any $\eps$. Hence   we obtain
$$
\frac{d(g(\eta_{\eps,\alpha}))}{d\eps}\mid_{\eps=0}=
\frac{d(g(\overline{\eta}_{\eps,\alpha}))}{d\eps}\mid_{\eps=0}.
$$
Thus, to prove (\ref{eq:derivative-moduli}), it suffices to show that $\frac{d(g(\overline{\eta}_{\eps,\alpha}))}{d\eps}\mid_{\eps=0}\neq 0$ for some choice of $\alpha$.

The derivative of $g(\overline{\eta}_{\eps,\alpha})$ at $\eps=0$ (considered as a component of the tangent of the moduli map $G$) can be  computed explicitly from Elizarov's paper \cite{Eli} as follows.
Make the change of variables $z \mapsto -z$ and multiply by $-1$, getting the new expression for the family
$$
\overline{\eta}_{\eps,\alpha}=
z^2\frac{\partial}{\partial z}+ \left(w(1-z)-\eps(w^3 + \alpha(-z))\right)\frac{\partial}{\partial w}.
$$
To put it in Elizarov's pattern, we have to divide it by $1-z$
so that
the family $\bar{\eta}_{\eps,\alpha}$ becomes the family $v_{p,\lambda}+ \eps P\partial_w$ considered in equation \cite[Eq. 1.8]{Eli}, where
\[ v_{p,\lambda} = \frac{z^2}{1-z} \frac{\partial}{\partial z}+w  \frac{\partial}{\partial w}  \ \ \ \text{(and hence  $p=1$ and  $\lambda=-1$)} \]
 and
$$
  P(z,w) = -\frac{\alpha(-z)+w^3}{1-z} = -(\alpha(-z)+w^3)(1 + z + z^{2} + \cdots).
  $$
Choose $\alpha(z)$ such that $\alpha(0) = \alpha'(0) = 0$ and write
$$-\alpha(-z)(1 + z + z^{2} + \cdots) = \sum_{k \geq 2}c_k z^k.$$
This corresponds to $f_{-1}(z)$ in the expansion in power series in \cite[Eq. 1.9]{Eli}.
The constant part $g$ of the moduli map corresponds in our case to the component $a_{0,-1}$ in equation \cite[Eq. 1.3]{Eli}  (that is, $j=0$ and $l=-1$).

From all these data, and computing the sequence $m_k(l)=m_k(-1)$ in \cite[Eq. 1.7]{Eli} for the corresponding {\em Borel transform}, we conclude
  from Elizarov's formula in \cite[Theorem 1]{Eli} that
  $$
  \frac{d(g(\overline{\eta}_{\eps,\alpha}))}{d\eps}\mid_{\eps=0}=u \sum_{k=2}^{\infty}  c_{k}  \frac{k}{\Gamma(k+2)},
$$
where $\Gamma$ is the Euler's Gamma function and $u$ is some non-zero constant which does not depend on $\alpha$ (if we want to be precise, we can check that in fact $u=-1$).
 Therefore,   $\frac{d(g(\overline{\eta}_{\eps,\alpha}))}{d\eps}\mid_{\eps=0} \neq 0$ for   a generic choice of $\alpha(z)$,
 as we wanted. This ends the proof. \hfill{$\square$}


\begin{thebibliography}{99}

\bibitem{And} \obra{Andronov, A.A. et al.}
{Qualitative theory of second-order dynamic systems}{J. Wiley, New York (1973)}
%
\bibitem{Aro-H-V} \obra{Aroca, J.M.; Hironaka, H.; Vicente, J.L.}{The
theory of the maximal contact. Desingularization theorems}{Mem. Mat.
Inst. Jorge Juan, Madrid, {  29 } and {   30} (1975)}
%
\bibitem{Bie-M1}\obra{Bierstone, E., Milman,P. D.}{Semianalytic and
subanalytic sets}{IHES Publ. Math.  { 67} (1988), 5--42}
%
\bibitem{Bie-M2}\obra{Bierstone, E.; Milman, P. D.}{Canonical desingularization
in characteriztic zero by blowing up the maximum strata of a local
invariant}{Invent. Math. {  128} (1997), n. 2,  207--302}
%
\bibitem{Bra-Se-Su}\obra{Brasselet, J.-P. and Seade, J. and Suwa, T.}{Vector fields on singular varieties}{Lecture Notes
in Mathematics, 1987. Springer-Verlag, Berlin (2009)}
%
\bibitem{Bru} \obra{Brunella, M.}
{Instability of Equilibria in Dimension Three}{Ann. Inst. Fourier,
48 (1998),   n.5, 1345--1357}
%
\bibitem{Cam-S} \obra{Camacho, C.; Sad, P.}
{Invariant varieties through singularities of holomorphic vector
fields}{Ann. of Math., 115 (1982), 579--595}
%
%
\bibitem{Can-M-S2} \obra{Cano, F.; Moussu, R.; Sanz, F.}{Pinceaux de courbes int\'{e}grales d'un champ de vecteurs analytique}{Ast\'{e}risque, 297 (2004), 1--34}

\bibitem{Carr}\obra{Carr, J.}{Applications of Center Manifolds
Theory}{Applied Math.
Sciences, 35. Springer-Verlag. New York (1981)}
%
\bibitem{Car-S} \obra{Carrillo, S.; Sanz, F.}{Briot-Bouquet's theorem in high dimension}{Publ. Mat.  58  (2014),  suppl., 135--152}
%
\bibitem{Ce-LN} \obra{Cerveau, D.; Lins Neto, A.}
{Codimension two holomorphic foliations}{To appear: J. Differential Geom. (2018)}
%
\bibitem{Eli} \obra{Elizarov, P. M.}
{Tangents to moduli maps}{ Nonlinear Stokes phenomena,
Adv. Soviet Math., 14. Amer. Math. Soc., Providence (1993), 107--138}

%
\bibitem{Gom-L}\obra {G\'{o}mez Mont, X.; Luengo, I.}
{Germs of holomorphic vector fields in $\C^3$ without a separatrix}
{Invent. Math. 109 (1992), n. 2, 211--219}
%
\bibitem{Gra}\obra{Grauert, H.}{On Levi's problem and the imbedding of
real-analytic manifolds}{Ann. of Math. { 68} (1958), 460--472}
%
\bibitem{Gra-S}\obra{Grandjean, V.; Sanz, F.}{On restricted analytic gradients on singular surfaces}{J. of Diff. Eq.,
255, n. 7, (2013), 1684--1708}
%
\bibitem{Hir} \obra{Hironaka, H.}{Introduction to real-analytic
sets and real-analytic maps}{Instituto Matematico ``L. Tonelli'', Pisa
(1973)}
%
\bibitem{Hir-P-S}\obra{Hirsch, M. W., Pugh, C., Shub, M.}{Invariant
Manifolds}{Lecture Notes
in Mathematics, 583. Springer-Verlag, Berlin (1977)}
%
\bibitem{Ily} \obra{Il'Yashenko, Y. S.}
{ Nonlinear Stokes phenomena}{ Nonlinear Stokes phenomena,
Adv. Soviet Math., 14. Amer. Math. Soc., Providence  (1993), 107--138}
%
\bibitem{Kur-M-P}\obra{K. Kurdyka, T. Mostowski and A. Parusinski}{The gradient
conjecture of R. Thom}{Ann. of Math. (2)   152  (2000), n. 3,
763--792}
\bibitem{Loj} \obra{$\L$ojasiewicz, S.}{Sur les trajectoires du gradient d'une
fonction analytique}{Seminari di Geometria, 115--117. Bologna.
(1983)}
%
\bibitem{Lue-O} \obra{Luengo, I.; Olivares, J.}{Germs of holomorphic vector fields in $\mathbb{C}^m$  without a separatrix}{Trans. Amer. Math. Soc.  352 (2000),  n. 12,   5511--5524}
%
\bibitem{Mar-R-S}\obra {Mart\'{\i}n,R.; Rolin, J.-P.; Sanz, F.}
{Local Monomialization of Generalized Analytic Functions}{Revista de
la Real Academia de Ciencias Exactas, Fisicas y Naturales, Serie A.
Matematicas,  107  (2013), n. 1, 189--211}
%
%
 \bibitem{Mar-R} \obra{Martinet, J.; Ramis, P.}{Probl\`emes de
modules pour les \'equations diff\'erentielles non lin\'eaires du premier
ordre}{Publ.\ Math.\ I.H.E.S., 55  (1982), 63--164}

\bibitem{Mil}\obra{Milnor, J.}{Singular Points of Complex
Hypersurfaces}{Annals of Mathematics Studies,   61. Princeton University
Press, Princeton  (1968)}
%
\bibitem{Mol}\obra {Mol, R.}{Flags of Holomorphic Foliations}
{Anais da Academia Brasileira de Ci\^{e}ncias { 83(3)} (2011), 775--786}
%
\bibitem{Mou}\obra {Moussu, R.}
{Sur la dynamique des gradients. Existence de vari\'{e}t\'{e}s invariantes}{Math. Ann.,  307  (1997),  n. 3, 445--460}
%
\bibitem{Oli}\obra{Olivares, J.}{On the Problem of Existence of Germs of Holomorphic Vector Fields in $\mathbb{C}^{m}$, without a Separatrix, ($m \geq 3$)}{In: Mozo, J. (editor), Ecuaciones Diferenciales - Singularidades, Universidadad de Valladolid
 (1997)}
%
\bibitem{Pug}\obra{Pugh, C.}{A generalized Poincar\'{e} Index Formula}{Topology, 7 (1968), 217--226}
 %
\bibitem{Ris}\obra {Risler, J.-J.}
{Invariant Curves and Topological Invariants for Real Plane Analytic Vector Fields}{J. Diff. Eq., 172 (2001), 212--226}
%
\bibitem{Roc}\obra {Roche, C. A.}
{Real Clemens Structures}{Singularities and dynamical systems (Ir\'{a}klion, 1983). North-Holland Math. Stud., 103. North-Holland, Amsterdam (1985),  249--270}
%
\bibitem{Sei}\obra{Seidenberg, A.}{Reduction of the singularities of
the differential equation
$Ady=Bdx$}{Am. J. of Math., 90 (1968), 248--269}
%
\bibitem{vdD} \obra{van den Dries, L.}{\it Tame Topology and O-minimal Structures}
{London Math. Soc., Lecture Notes Series,  248 (1998)}
%

\end{thebibliography}


\end{document}